\newtheorem{theorem}{Theorem}[section] 
\newtheorem{lemma}[theorem]{Lemma}
\newtheorem{corollary}[theorem]{Corollary}
\newtheorem{definition}{Definition}[section]
\newtheorem{proposition}[theorem]{Proposition}
\newtheorem{remark}[theorem]{Remark}
\newcommand{\dist}{{\rm dist}}
\newcommand{\cK}{{\mathcal K}}
\newcommand{\bR}{{\mathbb R}}
\def\avint{\mathop{\,\rlap{$\diagup$}\!\!\int}\nolimits}
\title[vectorial almost-minimizers]{Lipschitz regularity of a weakly coupled vectorial almost-minimizers for the $p$-Laplacian}
\author[M. Bayrami]{Masoud Bayrami}
\address{School of Mathematics, Institute for Research in Fundamental Sciences (IPM), P.O. Box: 19395-5746, Tehran, Iran}
\email{aminlouee@ipm.ir}
\author[M. Fotouhi]{Morteza Fotouhi}
\address{Department of Mathematical Sciences, Sharif University of Technology, P.O. Box: 11365-9415, Tehran, Iran}
\email{fotouhi@sharif.edu}
\author[H. Shahgholian]{Henrik Shahgholian}
\address{Department of Mathematics, KTH Royal Institute of Technology, 100 44, Stockholm, Sweden}
\email{henriksh@kth.se}
\date{\today}
\begin{document}

\begin{abstract}

For a given constant $\lambda > 0$ and a bounded Lipschitz domain $D \subset \mathbb{R}^n$ ($n \geq 2$), we establish that almost-minimizers of the functional
$$
 J(\mathbf{v}; D) = \int_D \sum_{i=1}^{m} \left|\nabla v_i(x) \right|^p+ \lambda \chi_{\{\left|\mathbf{v} \right|>0\}} (x) \, dx, \qquad 1<p<\infty,
 $$
where $\mathbf{v} = (v_1, \cdots, v_m)$, and $m \in \mathbb{N}$, exhibit optimal Lipschitz continuity in compact sets of $D$.
Furthermore, assuming $p \geq 2$ and employing a distinctly different methodology, we tackle the issue of boundary Lipschitz regularity for $v$. This approach simultaneously yields alternative proof for the optimal local Lipschitz regularity for the interior case.

\end{abstract}

\keywords{Almost-minimizer, Alt-Caffarelli-type functional, vectorial $p$-Laplacian, boundary regularity}

\subjclass[2020]{35R35, 35J60.}

\maketitle

\tableofcontents

\section{Introduction and the main results}

For $1<p<\infty$, $m\in \mathbb{N}$, a constant $\lambda>0$, and a bounded Lipschitz domain $D \subset \bR^n$ ($n \geq 2$), we will deal with  almost-minimizers of the  functional
\begin{equation}
\label{E0}
J(\mathbf{v}; D) = \int_D \sum_{i=1}^{m} \left|\nabla v_i(x) \right|^p + \lambda \chi_{\{ \left|\mathbf{v} \right|>0\}} (x) \, dx, 
\end{equation}
over  an admissible class 
$$ \cK:=\left\{ \mathbf{v} \in W^{1,p}(D; \mathbb{R}^m) \, : \, \text{$\mathbf{v}=\mathbf{g}$ on $\partial D$ and $v_i \geq 0$} \right\}. $$
Here $\mathbf{v}=(v_1, \cdots, v_m)$,  $|\mathbf{v}|=\sqrt{(v_1)^2+ \cdots + (v_m)^2}$,  and 
 $\mathbf{g}=(g_1,\cdots, g_m)$, $0\le g_i \in W^{1,p}(D)$.

\begin{definition}
We say that $\mathbf{u}=(u_1,\cdots,u_m)$ is a (local) almost-minimizer for $J$ in $D$, with constant $\kappa$ and exponent $\beta$, if 
$$
J(\mathbf{u}; B_r(x_0)) \leq \left(1+\kappa r^{\beta} \right) J(\mathbf{v}; B_r(x_0)), 
$$
for every ball $B_r(x_0)$ such that $\overline{B_r(x_0)} \subset D$ and every $\mathbf{v} \in W^{1,p}(B_r(x_0); \mathbb{R}^m)$ such that $\mathbf{u}=\mathbf{v}$ on $\partial B_r(x_0)$.
\end{definition}

It's worth noting that almost-minimizers are often referred to as $\omega$-minima in the literature, where $\omega: (0, r_0) \to [0,\infty)$, with $r_0>0$, represents a non-decreasing function with $\omega (0^+)=0$. In our context, within the given definition, this $\omega$ is represented as $\omega(r) = \kappa r^{\beta}$. When $\kappa=0$, the definition of an almost-minimizer reduces to the classical definition of a (local) minimizer for $J$.

\begin{definition}
By the free boundary of an almost-minimizer $\mathbf{u}$, we mean 
$$ F(\mathbf{u}):=\partial \{|\mathbf{u}| > 0\} \cap D, $$
where
$$ \{|\mathbf{u}| > 0\}=\cup_{i=1}^{m} \{u_i>0\}, $$
is called the positivity set of $\mathbf{u}$.
\end{definition}

Dealing with almost-minimizers presents a distinct challenge, as the conventional PDE framework isn't readily applicable for their further analysis. This differs markedly from minimizers, which conform to their corresponding Euler-Lagrange equation, neatly fitting within the PDE framework.

To elaborate, minimizers are associated with functional equations that squarely place them within the realm of PDEs—a tool not as readily accessible when addressing almost-minimizers. The concept of almost-minimizers isn't just a mathematical curiosity; it holds practical significance as well. It allows for the modeling of additional terms or disturbances that have a relatively minor impact at smaller scales.

In simpler terms, these almost-minimizers enable the characterization of perturbations with less explicit influence or disturbances stemming from noise. The versatility of this framework permits a broader scope of inquiries and facilitates the incorporation of slight errors and unpredictability into models.

To be more precise, in many scenarios, a minimizer for a complex functional often emerges as an almost-minimizer when dealing with a simplified/modified version of that functional, as exemplified in \cite{dipierro2022lipschitz}. For well-established findings concerning almost-minimizers of functionals with smooth integrands, we  refer to \cite[Chapter 7]{giusti2003direct} and \cite[Section 4.5]{mingione2006regularity}. Additionally, one can explore \cite[Appendix A]{de2022almost} for an instance of an almost-minimizer that serves as a solution to a singular system, inclusive of lower-order terms.

The local Lipschitz regularity of almost-minimizers in the context of Equation \eqref{E0} with specific parameters, namely $p=2$ and $m=1$, was initially established by David and Toro in \cite{MR3385167}. Subsequently, this result was extended to a certain range of values for the parameter $p$ by other researchers, as demonstrated in \cite{dipierro2022lipschitz} and \cite{forcillo2021regularity}. Additionally, the study of the semilinear case with variable coefficients is explored in \cite{MR3827045}.

Furthermore, for the case when $p=2$, the regularity theory pertaining to the free boundary has been investigated by David, Engelstein, and Toro in \cite{MR3948692}. This study was also conducted by De Silva and Savin in \cite{MR4126326} using the viscosity approach and improvements related to flatness. Moreover, an attempt has been initiated in the thesis \cite{forcillo2021regularity} to investigate this aspect for certain ranges of the parameter $p$ within the range of $1<p<\infty$.

Even when dealing with minimizers of Equation \eqref{E0}, there is a scarcity of results, particularly in the context of systems, as evidenced by references \cite{MR3827812}, \cite{MR4142361}, \cite{fotouhi-shahgholian2023}, and \cite{MR4085121}. However, our primary focus lies in studying almost-minimizers.

Our objective is to expand upon the findings presented in the recently published work \cite{dipierro2022lipschitz}, which addresses the local Lipschitz regularity of almost-minimizers in the context of Equation \eqref{E0}. Specifically, we aim to extend these results to cover any value of $p$ within the range of $1<p<\infty$ in a vectorial setting. It's worth noting that the outcomes reported in \cite{dipierro2022lipschitz} were limited to the parameter range where $p>\max \left\{\frac{2n}{n+2}, 1 \right\}$ and $m=1$.

Our approach for establishing the Lipschitz continuity of almost-minimizers will differ from the methodology employed in \cite{dipierro2022lipschitz}.

Our main result in this paper is presented in the following theorem.

\begin{theorem}[Local Lipschitz regularity of almost-minimizers]
\label{T1}
Assume that $ 1<p<\infty$, and let $\mathbf{u} :D \to \mathbb{R}^m $ be an almost-minimizer of $J$ in $D$. Then, $\mathbf{u}$ is locally Lipschitz continuous.
\end{theorem}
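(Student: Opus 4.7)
The plan is to establish Lipschitz regularity through a comparison with vectorial $p$-harmonic replacements, combined with a Campanato-type iteration exploiting the smallness of the almost-minimality defect $\kappa r^{\beta}$. Fix $x_0\in D$ and a radius $r>0$ with $B_r:=B_r(x_0)\subset\subset D$, and define $\mathbf{h}=(h_1,\ldots,h_m)$ by letting $h_i$ be the unique $p$-harmonic function in $B_r$ with $h_i=u_i$ on $\partial B_r$. Since $u_i\geq 0$, the weak maximum principle yields $h_i\geq 0$, and, provided $\mathbf{u}$ is not identically zero on $\partial B_r$ (the trivial case being handled separately), the strong maximum principle ensures $|\mathbf{h}|>0$ a.e.\ in $B_r$, so that $\chi_{\{|\mathbf{h}|>0\}}\equiv 1$ in $B_r$ and $\mathbf{h}$ is an admissible competitor.

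Applying the almost-minimality inequality to $\mathbf{v}=\mathbf{h}$, discarding the non-negative $\chi$-term on the left, and using the $p$-Dirichlet minimality $\int_{B_r}|\nabla h_i|^p\leq\int_{B_r}|\nabla u_i|^p$, one obtains the energy gap estimate
$$\sum_{i=1}^{m}\int_{B_r}\bigl(|\nabla u_i|^p-|\nabla h_i|^p\bigr)\leq\kappa r^{\beta}\sum_{i=1}^{m}\int_{B_r}|\nabla u_i|^p + C\lambda\, r^n.$$
Combining this with the fundamental quantitative monotonicity inequality for the $p$-Laplacian---for $p\geq 2$, the pointwise bound $|\xi|^p-|\eta|^p-p|\eta|^{p-2}\eta\cdot(\xi-\eta)\geq c|\xi-\eta|^p$ tested against $\nabla(u_i-h_i)$ (which kills the linear term thanks to the $p$-harmonicity of $h_i$), and for $1<p<2$ the analogous weighted bound in terms of $(|\nabla u_i|+|\nabla h_i|)^{p-2}|\nabla(u_i-h_i)|^2$ followed by Hölder's inequality---yields
$$\int_{B_r}|\nabla(\mathbf{u}-\mathbf{h})|^p\leq C\,\kappa r^{\beta}\int_{B_r}|\nabla\mathbf{u}|^p + C r^n.$$

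Next, the classical $C^{1,\alpha}$ regularity theory for $p$-harmonic functions (Uhlenbeck, DiBenedetto, Tolksdorf) provides the scale-invariant gradient bound
$$\|\nabla h_i\|_{L^{\infty}(B_{r/2})}^{p}\leq \frac{C}{|B_r|}\int_{B_r}|\nabla h_i|^p.$$
Splitting $\nabla\mathbf{u}=\nabla\mathbf{h}+\nabla(\mathbf{u}-\mathbf{h})$ and applying the two previous estimates gives, for every $\rho\in(0,r/2]$, the decay
$$\int_{B_\rho}|\nabla\mathbf{u}|^p\leq C\bigl[(\rho/r)^n+\kappa r^{\beta}\bigr]\int_{B_r}|\nabla\mathbf{u}|^p + C r^n.$$
A Campanato-type iteration lemma on the nondecreasing function $\phi(r):=\int_{B_r}|\nabla\mathbf{u}|^p$, which exploits the smallness of $\kappa r^{\beta}$ for small $r$ via a careful choice of the ratio $\rho/r$, then yields $\phi(r)\leq C r^n$ uniformly on compact subsets of $D$. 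Equivalently, $\tfrac{1}{|B_r|}\int_{B_r}|\nabla\mathbf{u}|^p\leq C$, which by Lebesgue differentiation gives $|\nabla\mathbf{u}|\in L^{\infty}_{\mathrm{loc}}(D)$ and hence the local Lipschitz continuity of $\mathbf{u}$.

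The principal obstacle lies in the singular range $1<p<2$, where the clean comparison inequality must be replaced by a weighted variant, and Hölder's inequality is needed to recover an $L^p$ bound on $\nabla(\mathbf{u}-\mathbf{h})$ at the cost of worse constants and additional powers of $\int|\nabla\mathbf{u}|^p$ that have to be absorbed carefully in the iteration. A second delicate point is that the decay exponent $n$ arising from the $p$-harmonic replacement matches exactly the target exponent, so the iteration is borderline; closing it rests essentially on the smallness of $\kappa r^{\beta}\to 0$ as $r\to 0$ together with a Giaquinta--Giusti style iteration lemma. Finally, the whole scheme presupposes a preliminary local $L^p$ bound on $\nabla\mathbf{u}$, which in turn follows from a Caccioppoli-type estimate obtained from the almost-minimality against suitable truncations adapted to the vectorial structure (noting that the coupling through $\chi_{\{|\mathbf{u}|>0\}}$ is mild, since it is merely a bounded perturbation).
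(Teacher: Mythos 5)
Your scheme has a genuine gap precisely at the Campanato iteration. When you replace $\mathbf{u}$ by its $p$-harmonic competitor $\mathbf{h}$ and invoke almost-minimality, the term $\lambda\,\chi_{\{|\mathbf{u}|>0\}}$ does not cancel against $\lambda\,\chi_{\{|\mathbf{h}|>0\}}$ on balls that meet $\{|\mathbf{u}|=0\}$: since $\mathbf{h}>0$ a.e.\ while $\mathbf{u}$ may vanish on a set of measure $\sim r^n$, the energy-gap estimate comes with an additive error $c\lambda|B_r\cap\{|\mathbf{u}|=0\}|$ of order $\lambda r^n$, \emph{not} $\kappa r^{\beta}\lambda r^n$. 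Thus the decay inequality you write,
$$\phi(\rho)\le C\bigl[(\rho/r)^n+\kappa r^{\beta}\bigr]\phi(r)+C\lambda r^n,\qquad \phi(\rho):=\int_{B_\rho}|\nabla\mathbf{u}|^p,$$
has the additive error sitting exactly at the critical exponent $n$. The Giaquinta--Giusti iteration lemma requires the exponent of the additive error to be \emph{strictly less} than the decay exponent; at equality the iteration only produces $\phi(\rho)\le C\rho^{n-\epsilon}$ for every $\epsilon>0$ (or with a logarithmic correction), which is sub-Lipschitz H\"older and nothing more. The smallness of $\kappa r^{\beta}$ that you invoke helps only with the multiplicative defect, not with the additive $\lambda r^n$; that term is the real obstruction and your argument cannot absorb it.

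This is why the paper's proof takes a different route. Your iteration is essentially the paper's Theorem \ref{T3}, which it records as giving $C^{0,\alpha}$ for every $\alpha<1$ only. To upgrade to Lipschitz, the paper (a) observes that on balls $B_r\Subset\{|\mathbf{u}|>0\}$ the characteristic functions of $\mathbf{u}$ and of the replacement are both $\equiv 1$, so the critical $\lambda r^n$ is demoted to $\kappa r^{\beta}\lambda r^n$ and the iteration closes, giving interior $C^{1,\eta}$ (Theorem \ref{T4}); (b) proves the scale-invariant gradient bound of Corollary \ref{imp-cor} from a Caccioppoli inequality (Proposition \ref{almost-minimizer-caccioppoli}); (c) establishes \emph{linear growth at free boundary points} (Proposition \ref{SSSP-01}) by a blow-up and compactness argument (Proposition \ref{LConverg}) leading to a contradiction with the strong minimum principle for a limiting $p$-harmonic function; and (d) closes by a dichotomy in the distance to $F(\mathbf{u})$, rescaling by that distance and invoking the interior estimate. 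Your proposal is missing the entire blow-up / linear-growth machinery, which is exactly what replaces the failed critical-exponent iteration. (Your handling of the $1<p<2$ case via the weighted monotonicity inequality and H\"older is sound, and the preliminary $L^p$ bound is automatic from $\mathbf{u}\in W^{1,p}$, but neither addresses the real obstacle.)
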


Additionally, in the last section, under the assumption $p \geq 2$, we explore the boundary Lipschitz continuity of almost-minimizers using a distinct method. This method, in turn, can be applied to achieve the same local result as outlined in Theorem \ref{T1}.

\medskip

The structure of the paper is outlined as follows:

In Section \ref{S2}, we demonstrate the local H\"older continuity of almost-minimizers. Subsequently, in Section \ref{n-section-3}, we show the local H\"older continuity of the gradient of almost-minimizers in the positivity set of $\mathbf{u}$.
Our reference for the proofs of the results in Sections \ref{S2} and \ref{n-section-3}, will be the work of De Filippis, \cite{de2020regularity}.
Moving on Section \ref{S2+}, we employ blow-up techniques to establish the local Lipschitz regularity of almost-minimizers, as outlined in Theorem \ref{T1}. Finally, Section \ref{LLL-section} is dedicated to study the boundary Lipschitz regularity, when also assuming $p\geq 2$.

\section{Partial regularity}
\label{M-S2}

In this section, our initial focus will be on establishing the $C^{0,\alpha}$-regularity of almost-minimizers, locally within $D$, for any exponent $\alpha \in (0,1)$.

\subsection{Local H\"older continuity of almost-minimizers}
\label{S2}

Before proceeding further, we introduce a useful lemma concerning the $p$-harmonic replacement of the components of the almost-minimizer $\mathbf{u}=(u_1, \cdots, u_m)$. We refer to ${u_i}^{*}_s$ as the $p$-harmonic replacement of ${u_i}$ in $B_s(x_0)$, which is the unique $p$-harmonic function in $B_s(x_0)$ with the same trace as ${u_i}$ on $\partial B_s(x_0)$.

\begin{lemma}
\label{Useful--Lemma}
Assume that $1<p<\infty$. Let $\mathbf{u}=(u_1, \cdots, u_m)$ be an almost-minimizer of $J$ in $D$, and let $x_0 \in D$ and $B_s(x_0) \Subset D$.
Define ${u_i}^{*}_s$ to be the $p$-harmonic replacement of ${u_i}$ in $B_s(x_0)$. Then,
\begin{equation}
\label{E4.5}
\int_{B_s(x_0)} \sum_{i=1}^{m} \left| \mathbf{V}(\nabla u_i) - \mathbf{V}(\nabla {u_i}^{*}_s) \right|^2 \, dx \leq c \kappa s^{\beta} \int_{B_{s}(x_0)} \sum_{i=1}^{m} \left|\nabla u_i \right|^p \, dx + c\lambda s^n,
\end{equation}
where the map $\mathbf{V}: \mathbb{R}^n \to \mathbb{R}^n$ is given by
\begin{equation}
\label{auxillary-map}
\mathbf{V}(z):= \left| z \right|^{\frac{p-2}{2}} z.
\end{equation}
\end{lemma}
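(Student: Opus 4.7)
The plan is to test almost-minimality against the componentwise $p$-harmonic replacement and then invoke the standard quasiconvexity bound for $|\mathbf{V}(\nabla\cdot)|^2$. Precisely, define $\mathbf{v}=(u_{1,s}^{*},\dots,u_{m,s}^{*})$ on $B_s(x_0)$ and extend it by $\mathbf{u}$ outside. Since each $u_i\geq 0$ on $\partial B_s(x_0)$, the maximum principle for $p$-harmonic functions gives $u_{i,s}^{*}\geq 0$, so $\mathbf{v}$ is an admissible competitor with $\mathbf{v}=\mathbf{u}$ on $\partial B_s(x_0)$. Applying the almost-minimality inequality and rearranging, one obtains
$$
\int_{B_s(x_0)} \sum_{i=1}^{m}\bigl(|\nabla u_i|^p-|\nabla u_{i,s}^{*}|^p\bigr)\,dx \;\leq\; \kappa s^{\beta}\int_{B_s(x_0)}\sum_{i=1}^{m}|\nabla u_{i,s}^{*}|^p\,dx \;+\; \lambda\!\int_{B_s(x_0)}\bigl[(1+\kappa s^{\beta})\chi_{\{|\mathbf{v}|>0\}}-\chi_{\{|\mathbf{u}|>0\}}\bigr]\,dx.
$$
The bracketed characteristic-function term is bounded by $1+\kappa s^{\beta}\leq C$, so that whole contribution is absorbed into $c\lambda s^n$.

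Next I would symmetrize the left-hand side by subtracting the quantity $p\sum_i\int_{B_s(x_0)}|\nabla u_{i,s}^{*}|^{p-2}\nabla u_{i,s}^{*}\cdot\nabla(u_i-u_{i,s}^{*})\,dx$, which is zero because each $u_{i,s}^{*}$ is $p$-harmonic in $B_s(x_0)$ and $u_i-u_{i,s}^{*}\in W^{1,p}_0(B_s(x_0))$ is an admissible test function. This produces the pointwise integrand
$$
|\nabla u_i|^p-|\nabla u_{i,s}^{*}|^p-p|\nabla u_{i,s}^{*}|^{p-2}\nabla u_{i,s}^{*}\cdot(\nabla u_i-\nabla u_{i,s}^{*}),
$$
and by the classical quasiconvexity/monotonicity estimate for the $p$-Laplacian (valid for every $1<p<\infty$, with the small-$p$ case requiring the usual weighted form) this integrand dominates $c|\mathbf{V}(\nabla u_i)-\mathbf{V}(\nabla u_{i,s}^{*})|^2$.

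Finally, since $u_{i,s}^{*}$ minimizes $\int_{B_s(x_0)}|\nabla w|^p\,dx$ among $W^{1,p}$-functions sharing the trace of $u_i$, we have $\int_{B_s(x_0)}|\nabla u_{i,s}^{*}|^p\,dx\leq\int_{B_s(x_0)}|\nabla u_i|^p\,dx$. Plugging this into the right-hand side of the almost-minimality inequality obtained above yields exactly \eqref{E4.5}.

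The main technical obstacle is the quasiconvexity inequality used in the second step: for $p\geq 2$ it is the straightforward Uhlenbeck-type bound, but for $1<p<2$ one has to be careful that the comparison to $|\mathbf{V}(\nabla u_i)-\mathbf{V}(\nabla u_{i,s}^{*})|^2$ holds pointwise without additional weights disturbing the integration; this is standard but is the only nontrivial input. Everything else is bookkeeping: admissibility of the competitor, the trivial $L^\infty$-bound on the characteristic-function mismatch (contributing $c\lambda s^n$), and the $p$-harmonic energy minimality.
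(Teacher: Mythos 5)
Your proposal is correct and follows essentially the same route as the paper: test almost-minimality against the componentwise $p$-harmonic replacement, exploit the $p$-harmonic weak formulation to make the linear cross term vanish under the integral, invoke the standard pointwise monotonicity estimate $c\,|\mathbf{V}(z_1)-\mathbf{V}(z_2)|^2 \le |z_2|^p-|z_1|^p - p|z_1|^{p-2}z_1\cdot(z_2-z_1)$ with $z_1=\nabla u_{i,s}^{*}$, $z_2=\nabla u_i$, and then absorb the characteristic-function mismatch into $c\lambda s^n$ and use energy minimality of the replacement to close the right-hand side. Two small remarks: the maximum-principle step to get $u_{i,s}^{*}\ge 0$ is unnecessary, since the paper's definition of almost-minimality admits any $W^{1,p}$ competitor agreeing with $\mathbf{u}$ on $\partial B_s(x_0)$, with no sign constraint; and the monotonicity inequality is genuinely pointwise in the form stated for every $1<p<\infty$ (the paper cites de Filippis--Mingione for it), so no separate weighted variant is needed for $1<p<2$.
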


\begin{proof}
Let $\mathbf{u}$ be an almost-minimizer of $J$ in $D$, and let $x_0 \in D$ and $B_s(x_0) \Subset D$. 
By the minimality of ${u_i}^{*}_s$'s, we know that
\begin{equation}
\label{Fi-3}
\int_{B_s(x_0)} \left|\nabla {u_i}^{*}_s \right|^p \, dx \leq \int_{B_s(x_0)} \left|\nabla u_i \right|^p \, dx,
\end{equation}
and also,
\begin{equation}
\label{E2-S-E}
\int_{B_s(x_0)} \left| \nabla {u_i}^{*}_s \right|^p   \,dx = \int_{B_s(x_0)} \left|\nabla {u_i}^{*}_s \right|^{p-2} \left( \nabla u_i \cdot \nabla {u_i}^{*}_s \right)  \, dx.
\end{equation}
On the other hand, since the following standard strict monotonicity inequality
\begin{equation}
\label{Fi-4}
c \left|\mathbf{V}(z_1)-\mathbf{V}(z_2) \right|^2 + p  |z_1|^{p-2} z_1 \cdot (z_2-z_1) \leq   |z_2|^p-|z_1|^p,
\end{equation}
holds true for the map \eqref{auxillary-map}, (see \cite[Section 3.3]{de2023nonuniformly}, especially (3.19) for a  proof for a general scenario);
hence, from \eqref{E2-S-E} and \eqref{Fi-4}, we obtain that 
$$
\begin{aligned}
\int_{B_s(x_0)} \sum_{i=1}^{m} \left|\mathbf{V}(\nabla u_i)-\mathbf{V}(\nabla {u_i}^{*}_s) \right|^2 \, dx \leq  c \int_{B_s(x_0)} \sum_{i=1}^{m} \left( \left|\nabla u_i \right|^p-\left|\nabla {u_i}^{*}_s \right|^p \right) \, dx.
\end{aligned}
$$
Finally, by using $\mathbf{v}=({u_1}^{*}_s, \cdots, {u_m}^{*}_s)$ in the definition of the almost-minimizer, $\mathbf{u}$, and also by \eqref{Fi-3}, we obtain  
$$ 
\begin{aligned}
 \int_{B_s(x_0)} \sum_{i=1}^{m} \left( \left|\nabla u_i \right|^p- \left|\nabla {u_i}^{*}_s \right|^p \right) \, dx 
 \leq \kappa s^{\beta} \int_{B_s(x_0)} \sum_{i=1}^{m} \left|\nabla {u_i}^{*}_s \right|^p \, dx + c \lambda s^n 
\leq \kappa s^{\beta} \int_{B_{s}(x_0)} \sum_{i=1}^{m} |\nabla u_i |^p \, dx + c \lambda s^n. 
\end{aligned}
$$
Therefore, 
we arrive  at \eqref{E4.5}, and the proof is complete.
\end{proof}

\begin{theorem}
\label{T3}
Assume that $1<p<\infty$. 
Let $\mathbf{u}=(u_1,\cdots,u_m)$ be an almost-minimizers of $J$ in $D$, with some positive constant $\kappa \le \kappa_0$ and exponent $\beta$. 
Then, $\mathbf{u}$ belongs to $C_{\mathrm{loc}}^{0,\alpha} \left(D; \mathbb{R}^m \right)$, for any $\alpha \in (0,1)$. More precisely, for any $\tilde{D} \Subset D$, there is constant $C=C(\tilde D,  p, n, \kappa_0, \beta)$ such that
$$ \|\mathbf{u}\|_{C^{0,\alpha}(\tilde{D}; \mathbb{R}^m)}\leq C\left(\|\nabla \mathbf{u}\|_{L^{p}(D; \mathbb{R}^m)}+\lambda^{\frac{1}{p}} \right). $$ 
\end{theorem}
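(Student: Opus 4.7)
The plan is to run a perturbed Campanato--Morrey iteration on the local $p$-Dirichlet energy $\Phi_i(\rho) := \int_{B_\rho(x_0)} |\nabla u_i|^p\,dx$ and its sum $\Phi := \sum_{i=1}^m \Phi_i$, using the $p$-harmonic replacements ${u_i}^{*}_s$ from Lemma~\ref{Useful--Lemma} as competitors. The target is a Morrey-type decay $\Phi(\rho) \leq C\rho^{n-p+p\alpha}$ on balls contained in a neighbourhood of $\tilde D$, which by the Morrey--Campanato embedding will translate into $\mathbf{u} \in C^{0,\alpha}_{\mathrm{loc}}(D;\mathbb{R}^m)$.

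Two ingredients drive the comparison. First, the classical interior gradient estimate for $p$-harmonic functions yields the pure-power decay
\begin{equation*}
\int_{B_\rho} |\nabla {u_i}^{*}_s|^p\,dx \leq C(\rho/s)^n \int_{B_s} |\nabla {u_i}^{*}_s|^p\,dx \leq C(\rho/s)^n\, \Phi_i(s), \qquad \rho \leq s/2,
\end{equation*}
the last inequality coming from the minimality of ${u_i}^{*}_s$. Second, I convert the $\mathbf{V}$-closeness bound \eqref{E4.5} into an $L^p$-closeness bound on $\nabla u_i - \nabla {u_i}^{*}_s$: for $p \geq 2$ this is immediate from the pointwise domination $|z_1-z_2|^p \leq C|\mathbf{V}(z_1) - \mathbf{V}(z_2)|^2$, while for $1<p<2$ it requires the weaker bound $|z_1-z_2|^2 \leq C(|z_1|+|z_2|)^{2-p}|\mathbf{V}(z_1) - \mathbf{V}(z_2)|^2$, followed by H\"older (with exponents $2/p$ and $2/(2-p)$) and Young's inequality to absorb the resulting $\Phi(s)^{(2-p)/2}$ factor. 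In either regime one arrives at
\begin{equation*}
\int_{B_s} \sum_{i=1}^m |\nabla u_i - \nabla {u_i}^{*}_s|^p\,dx \leq C\omega(s)\,\Phi(s) + C\lambda s^n,
\end{equation*}
with $\omega(s) \to 0$ as $s \to 0^+$. Combining this with the previous display via $|a+b|^p \leq 2^{p-1}(|a|^p + |b|^p)$ produces the perturbed Campanato estimate
\begin{equation*}
\Phi(\rho) \leq C\bigl[(\rho/s)^n + \omega(s)\bigr]\Phi(s) + C\lambda s^n, \qquad 0 < \rho \leq s \leq s_0.
\end{equation*}

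A standard hole-filling/iteration lemma of Campanato--Giaquinta type then upgrades this into $\Phi(\rho) \leq C\rho^\gamma$ for any $\gamma \in (0,n)$, once $s_0$ is chosen small enough depending on $\tilde D, n, p, \kappa_0, \beta$. Choosing $\gamma = n - p + p\alpha$ for any $\alpha \in (0,1)$ (using Sobolev directly in the trivial case $p>n$ with small $\alpha$), Morrey--Campanato embedding yields the H\"older seminorm bound on $\tilde D$, and combining with a Poincar\'e-type inequality produces the stated $C^{0,\alpha}$-norm bound. The additive form $\|\nabla \mathbf{u}\|_{L^p(D)} + \lambda^{1/p}$ of the right-hand side arises naturally after taking $p$-th roots of the energy bound.

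The principal technical obstacle is the subquadratic regime $1<p<2$, where $\mathbf{V}$ degenerates at the origin and the clean pointwise dominance $|z_1-z_2|^p \lesssim |\mathbf{V}(z_1)-\mathbf{V}(z_2)|^2$ fails. The H\"older--Young step sketched above must be executed carefully: the $\Phi(s)$-factor produced by absorbing the ambient energy must remain controlled by the smallness of $\omega(s)$, and the source term must retain the pristine scaling $\lambda s^n$ so as not to degrade the final Morrey exponent. Once this step is handled, the remainder of the argument is classical.
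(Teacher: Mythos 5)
Your proposal is structurally the same as the paper's: both run a Campanato iteration on the $p$-energy $\Phi(s)=\int_{B_s}\sum_i|\nabla u_i|^p$, comparing against the $p$-harmonic replacements and exploiting the interior decay estimate for $p$-harmonic functions together with the closeness bound \eqref{E4.5}, then concluding via Poincar\'e and the Morrey--Campanato embedding. The one genuine divergence is how the closeness estimate \eqref{E4.5} is fed into the iteration, and here the paper's route is cleaner. You propose converting the $\mathbf{V}$-closeness $\int|\mathbf{V}(\nabla u_i)-\mathbf{V}(\nabla {u_i}^{*}_s)|^2$ into $L^p$-closeness of the gradients $\int|\nabla u_i-\nabla {u_i}^{*}_s|^p$, which is immediate for $p\ge 2$ but forces the H\"older--Young detour for $1<p<2$. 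The paper avoids the conversion entirely by observing that $|\mathbf{V}(z)|^2=|z|^p$, so one can write
$$
\int_{B_{\tau r}}|\nabla u_i|^p
=\int_{B_{\tau r}}|\mathbf{V}(\nabla u_i)|^2
\le c\int_{B_{\tau r}}\bigl|\mathbf{V}(\nabla u_i)-\mathbf{V}(\nabla {u_i}^{*}_r)\bigr|^2
+c\int_{B_{\tau r}}|\nabla {u_i}^{*}_r|^p,
$$
and then estimate the two terms directly by \eqref{E4.5} and the $p$-harmonic decay \eqref{s-h-estimate-1}; this works uniformly for all $1<p<\infty$ with no case split and no absorption step. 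Your version is workable, but one claim is imprecise: after H\"older and Young in the range $1<p<2$ you do not get $\omega(s)\to 0$; you get $\omega(s)=(\kappa s^\beta)^{p/2}+\delta$ with a fixed residue $\delta>0$ from Young's inequality, so the iteration must be set up with $\delta$ chosen small \emph{before} the threshold radius $s_0$ (and after the scale ratio $\tau$) — a standard ordering of constants, but it should be stated, since as written the statement $\omega(s)\to 0$ is false. If you adopt the $\mathbf{V}$-level triangle inequality instead, this whole bookkeeping disappears.
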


\begin{proof}
Let $B_{r}(x_0) \Subset D$, $r \leq 1$.
Now, define $v_i:={u_i}^{*}_r$'s as the $p$-harmonic replacement of $u_i$'s in $B_r(x_0)$. 
For the $v_i$'s, we also have the following standard estimate (see \cite[Lemma 5.8]{diening2009everywhere})
\begin{equation}
\label{s-h-estimate-1}
\int_{B_t(x_0)}  |\nabla v_i|^p \, dx \leq c \left(\frac{t}{s} \right)^n \int_{B_s(x_0)} |\nabla v_i |^p \, dx,
\end{equation}
for a universal constant $c$ and for any $0<t<s \leq 1$. Now, we fix $\tau \in (0,1)$, recall the auxiliary map \eqref{auxillary-map}, and use $ |\mathbf{V}(z)|^2 = |z|^p $ and \eqref{s-h-estimate-1}, together with the estimate \eqref{E4.5} on the ball $B_r(x_0)$, to have
\begin{align} \nonumber
\int_{B_{ \tau r}(x_0)} \sum_{i=1}^{m} |\nabla u_i|^p \, dx & \leq c \left\{ \int_{B_{\tau r}(x_0)} \sum_{i=1}^{m} \left|\mathbf{V}(\nabla u_i) - \mathbf{V}(\nabla v_i) \right|^2 \, dx + \int_{B_{ \tau r}(x_0)} \sum_{i=1}^{m} |\nabla v_i|^p \, dx \right\} \\ \nonumber
& \leq c \left\{ \int_{B_r(x_0)} \sum_{i=1}^{m} \left|\mathbf{V}(\nabla u_i) - \mathbf{V}(\nabla v_i) \right|^2 \, dx + \tau^n \int_{B_r(x_0)} \sum_{i=1}^{m} |\nabla v_i|^p \, dx \right\} \\ \nonumber
& \leq c \left( \kappa r^{\beta}  + \tau^n \right) \int_{B_{r}(x_0)} \sum_{i=1}^{m} |\nabla u_i |^p \, dx + c \lambda r^n \\ \label{Fi-08}
& \leq \tau^{n-\epsilon} \left( r^{\beta} c \kappa_0  \tau^{ \epsilon -n}  + c \tau^{\epsilon} \right) \int_{B_{r}(x_0)} \sum_{i=1}^{m} |\nabla u_i |^p \, dx + c \lambda r^n ,
\end{align}
for any $\epsilon \in (0,n)$.
Next, fixing  $\tau \in (0,1)$ such that $c \tau^{\epsilon} < \frac{1}{2}$, and a threshold radius $0<r<R_* \leq 1$ such that $R_*^{\beta} c \kappa_0 \tau^{\epsilon-n}<\frac{1}{2}$, we have that  \eqref{Fi-08} becomes
$$
\int_{B_{\tau r}(x_0)} \sum_{i=1}^{m} |\nabla u_i|^p \, dx \leq \tau^{n-\epsilon} \int_{B_{r}(x_0)} \sum_{i=1}^{m} |\nabla u_i|^p \, dx + c \lambda r^n.
$$
In particular, for $k \in \mathbb{N}$, we obtain 
\begin{align} \nonumber
 \int_{B_{\tau^k r}(x_0)} \sum_{i=1}^{m} |\nabla u_i |^p \, dx& \leq \tau^{k(n-\epsilon)} \int_{B_{r}(x_0)} \sum_{i=1}^{m} |\nabla u_i|^p \, dx + c \lambda r^n  \frac{\tau^{k(n-\epsilon)}-\tau^{kn}}{\tau^{n-\epsilon}-\tau^{n}} \\ \label{Fi-11}
&\leq \tau^{k(n-\epsilon)} \int_{B_{r}(x_0)} \sum_{i=1}^{m} |\nabla u_i|^p \, dx + c \lambda \left(\tau^k r \right)^{n-\epsilon} .   
\end{align} 
When $0 < s < r \leq R_*$, it's straightforward to find a natural number $k$ such that 
$\tau^{k+1} r \leq s \leq \tau^k r$. Using equation \eqref{Fi-11}, we can then obtain:

\begin{align} \nonumber
\int_{B_s(x_0)} \sum_{i=1}^{m} |\nabla u_i|^p \, dx  & \leq \int_{B_{\tau^k r}(x_0)} \sum_{i=1}^{m} |\nabla u_i|^p \, dx \\ \nonumber
& \leq \tau^{\epsilon-n} \tau^{(k+1)(n-\epsilon)} \int_{B_{r}(x_0)} \sum_{i=1}^{m} |\nabla u_i|^p \, dx + c \lambda \tau^{\epsilon-n} \left(\tau^{k+1} r \right)^{n-\epsilon} \\ \label{Fi-12}
& \leq c \left( \frac{s}{r} \right)^{n-\epsilon} \int_{B_{r}(x_0)} \sum_{i=1}^{m} |\nabla u_i|^p \, dx + c \lambda s^{n-\epsilon}.
\end{align}
Upon relaxing the restriction $r \leq R_*$, we can distinguish between two scenarios: 
when $0 < s \leq R_* < r \leq 1$, and when 
$0 < R_* < s < r \leq 1$. In the first case, as indicated by \eqref{Fi-12}, we have:

$$
\begin{aligned}
\int_{B_s(x_0)} \sum_{i=1}^{m} |\nabla u_i|^p \, dx & \leq c \left( \frac{s}{R_*} \right)^{n-\epsilon} \int_{B_{R_*}(x_0)} \sum_{i=1}^{m} |\nabla u_i|^p \, dx + c \lambda s^{n-\epsilon} 
\\ & \leq c \left( \frac{s}{r} \right)^{n-\epsilon} \left( \frac{r}{R_*}\right)^{n-\epsilon} \int_{B_{r}(x_0)} \sum_{i=1}^{m} |\nabla u_i|^p \, dx + c \lambda s^{n-\epsilon}
\\& \leq \frac{c}{R_*^{n-\epsilon}} \left( \frac{s}{r} \right)^{n-\epsilon} \int_{B_{r}(x_0)} \sum_{i=1}^{m}
|\nabla u_i|^p \, dx +c \lambda s^{n-\epsilon}.
\end{aligned}
$$
While, in the second case, 
$$
\begin{aligned}
\int_{B_s(x_0)} \sum_{i=1}^{m} |\nabla u_i|^p \, dx & \leq  \left( \frac{s}{r} \right)^{n-\epsilon} \left( \frac{r}{R_*} \right)^{n-\epsilon} \int_{B_{s}(x_0)} \sum_{i=1}^{m} |\nabla u_i|^p \, dx + c \lambda s^{n-\epsilon}\\ 
& \leq  R_*^{\epsilon-n} \left( \frac{s}{r} \right)^{n-\epsilon} \int_{B_{r}(x_0)} \sum_{i=1}^{m} |\nabla u_i|^p \, dx + c \lambda s^{n-\epsilon} \\
& \leq c \left( \frac{s}{r} \right)^{n-\epsilon} \int_{B_{r}(x_0)} \sum_{i=1}^{m}
|\nabla u_i|^p \, dx + c \lambda s^{n-\epsilon}.
\end{aligned}
$$
Now,
\eqref{Fi-12} holds for any $0<s<r \leq 1$ and for a constant $c$ depends only on $p, n, \kappa_0$ and $\epsilon$. 

As a consequence of \eqref{Fi-12}, after a standard covering argument, for any fixed $\epsilon > 0$, we have 
\begin{equation}
\label{Fi-14}
\avint_{B_{r}(x_0)} \sum_{i=1}^{m} |\nabla u_i|^p \, dx \le  c \left(\|\nabla \mathbf{u}\|_{L^{p}(D; \mathbb{R}^m)}^p+\lambda \right) r^{-\epsilon},
\end{equation}
for any $0<r<1$, with a constant $c=c(p, n, \kappa_0, \epsilon)$.
From  Poincare's inequality and \eqref{Fi-14}, we see that
$$ 
\avint_{B_{r}(x_0)} \sum_{i=1}^{m} |u_i-{u_i}_{r}|^p \, dx \leq c r^p \avint_{B_{r}(x_0)} \sum_{i=1}^{m} |\nabla u_i|^p \, dx \leq C \left(\|\nabla \mathbf{u}\|_{L^{p}(D; \mathbb{R}^m)}^p +\lambda \right) r^{p-\epsilon},
$$
where ${u_i}_{r}$ denotes the average of $u_i$ in $B_{r}(x_0)$. Thus after covering, for any fixed $0<\epsilon <\min\{n,p\}$, 
by the Morrey and Campanato space embedding theorem, we have $ \mathbf{u} \in C_{\mathrm{loc}}^{0,1-\frac{\epsilon}{p}}(D; \mathbb{R}^m)$, hence $\mathbf{u}$ is locally H\"older continuous at any positive exponent less than one over $D$. 
\end{proof}

\begin{remark}\label{Remark-e-01}
 The given argument extends to $\partial D$ in the case of a uniformly $C^1$-smooth domain and Dirichlet data represented by $\mathbf{g}=(g_1, \cdots, g_m) \in C^{0,1}(D;\mathbb{R}^m)$. Specifically, one can replicate the steps outlined in the previous proof, but this time employing the subsequent boundary estimate for some $q>p$,
$$
\int_{B_t(x_0) \cap D}  |\nabla v_i|^p \, dx \leq c \left( \left(\frac{t}{s} \right)^{\tilde{n}}\int_{B_s(x_0) \cap D} |\nabla v_i |^p \, dx + t^{n\left(1-\frac{p}{q}\right)} \left(\int_{B_s(x_0) \cap D} |\nabla g_i|^q \, dx  \right)^{\frac{p}{q}} \right),
$$ 
for any $\tilde{n} \in \left[n\left(1-\frac{p}{q}\right),n\right) $, instead of \eqref{s-h-estimate-1} (see e.g. 
\cite[Lemma 3.4]{duzaar2004partial}, for the special case of flat boundary, $\partial D$, near $x_0$, and then by straightening out the 
boundary for the general case), to get the $C^{0,\alpha}$-regularity of almost-minimizers, up-to-the-boundary, for any exponent $\alpha \in (0,1)$. See \cite[Theorem 5.4]{duzaar2004partial} for more details.
Only notice that, the definition of almost-minimizer with the prescribed boundary value $\mathbf{g}$, should be revisited as follows: $\mathbf{u}$ is an almost-minimizer for $J$ in $D$, with  constant $\kappa$ and exponent $\beta$, if $\mathbf{u}-\mathbf{g} \in W_0^{1,p}(D; \mathbb{R}^m)$, and 
$$
J(\mathbf{u}; B_r(x_0) \cap D) \leq \left(1+\kappa r^{\beta} \right) J(\mathbf{v}; B_r(x_0) \cap D), 
$$
for every ball $B_r(x_0) \subset \mathbb{R}^n$ and every $\mathbf{v}$ such that $\mathbf{u}-\mathbf{v} \in W^{1,p}_0(B_r(x_0) \cap D; \mathbf{R}^m)$.
\end{remark}

\subsection{Local $C^{1,\eta}$-regularity of almost-minimizers in $\{|\mathbf{u}|>0\}$}
\label{n-section-3}

Now, in the following theorem, we focus on the local H\"older continuity of the gradient of almost-minimizers, away from the free boundary. While the proof of this result is well-established, at least for the case when $p=2$ and in the scalar scenario, as found in references such as \cite{anzellotti1983}, \cite[Section 3]{MR3385167}, or \cite[Chapter 8]{giusti2003direct}, we include the proof here for the sake of completeness.

\begin{theorem}
\label{T4}
Assume that $1<p<\infty$. Let $\mathbf{u}=(u_1,\cdots,u_m)$ be an almost-minimizers of $J$ in $D$, with some constant $\kappa\le \kappa_0$ and exponent $\beta$. 
Then, $\mathbf{u}$ belongs to $C_{\mathrm{loc}}^{1,\eta} \left(\{|\mathbf{u}| > 0\}; \mathbb{R}^m \right)$.
More precisely, for any $\tilde D\Subset \{|\mathbf{u}| > 0\}$, there is positive exponent $\eta=\eta(p, n, \beta)$ and constant $C=C(\tilde D,p, n, \kappa_0, \beta)$ such that 
\[
\|\mathbf{u}\|_{C^{1,\eta}(\tilde{D}; \mathbb{R}^m)}\leq C \left(
\|\nabla \mathbf{u}\|_{L^{p}(D; \mathbb{R}^m)}
+\lambda^{\frac{1}{p}} \right). 
\]
\end{theorem}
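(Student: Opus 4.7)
The plan is to reduce the problem on $\{|\mathbf{u}|>0\}$ to the classical $C^{1,\alpha}$-theory for $p$-harmonic functions via a Campanato-type iteration, along exactly the same lines as Theorem \ref{T3} but comparing gradient-excesses rather than energies.

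First step (positivity persists for the replacement): Theorem \ref{T3} gives $\mathbf{u}\in C^{0,\alpha}_{\mathrm{loc}}$, so $\{|\mathbf{u}|>0\}$ is open. Given $\tilde D\Subset\{|\mathbf{u}|>0\}$, there is $c_0>0$ with $|\mathbf{u}|\geq c_0$ on a neighborhood. For $x_0\in\tilde D$ and $s$ small enough, the H\"older bound forces at least one component $u_{i_0}$ to exceed $c_0/(2\sqrt m)$ throughout $\overline{B_s(x_0)}$; its $p$-harmonic replacement $v_{i_0}:=(u_{i_0})^{*}_s$ then inherits this lower bound by the minimum principle for $p$-harmonic functions. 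Hence $\chi_{\{|\mathbf{u}|>0\}}=\chi_{\{|\mathbf{v}|>0\}}=1$ on $B_s(x_0)$, so the area discrepancy in the almost-minimality inequality contributes only $c\lambda\kappa s^{n+\beta}$, and revisiting the argument for \eqref{E4.5} sharpens it to
\[
\int_{B_s(x_0)}\sum_{i=1}^m|\mathbf{V}(\nabla u_i)-\mathbf{V}(\nabla v_i)|^2\,dx \leq c\kappa s^\beta\!\int_{B_s(x_0)}\sum_{i=1}^m|\nabla u_i|^p\,dx + c\kappa\lambda s^{n+\beta}.
\]

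Second step (hybrid excess decay): Invoke the classical gradient-excess decay for $p$-harmonic functions (Uhlenbeck, Tolksdorf, DiBenedetto, Manfredi): there exists $\alpha_0=\alpha_0(p,n)\in(0,1]$ such that for any $p$-harmonic function $v$ on $B_s$ and $0<t\leq s$,
\[
\int_{B_t(x_0)}\!|\mathbf{V}(\nabla v)-(\mathbf{V}(\nabla v))_{B_t}|^2\,dx \leq c\,(t/s)^{n+2\alpha_0}\!\int_{B_s(x_0)}\!|\mathbf{V}(\nabla v)-(\mathbf{V}(\nabla v))_{B_s}|^2\,dx.
\]
Applying this componentwise to $\mathbf{v}$ and combining with the sharpened comparison above via the triangle inequality yields
\[
\Psi(x_0,t)\leq c\,(t/s)^{n+2\alpha_0}\Psi(x_0,s) + c\kappa s^\beta\!\int_{B_s(x_0)}\sum_i|\nabla u_i|^p\,dx + c\kappa\lambda s^{n+\beta},
\]
where $\Psi(x_0,r):=\int_{B_r(x_0)}\sum_i|\mathbf{V}(\nabla u_i)-(\mathbf{V}(\nabla u_i))_{B_r}|^2\,dx$.

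Third step (iteration and conclusion): Fix $\eta_0\in\bigl(0,\min(\alpha_0,\beta/2)\bigr)$ and $\tau\in(0,1)$ with $c\tau^{n+2\alpha_0}\leq\tfrac{1}{2}\tau^{n+2\eta_0}$. Using \eqref{Fi-14} with $\epsilon<\beta-2\eta_0$ to bound $\int_{B_s}\sum|\nabla u_i|^p\leq C s^{n-\epsilon}(\|\nabla\mathbf{u}\|_{L^p(D)}^p+\lambda)$, both error terms above become $O(s^{n+2\eta_0})$, and iterating on $s_k=\tau^k s_0$ produces a Campanato bound $\Psi(x_0,r)\leq C r^{n+2\eta_0}(\|\nabla\mathbf{u}\|_{L^p(D)}^p+\lambda)$, uniformly for $x_0\in\tilde D$. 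Campanato's embedding then gives $\mathbf{V}(\nabla\mathbf{u})\in C^{0,\eta_0}_{\mathrm{loc}}(\{|\mathbf{u}|>0\})$, and since $\mathbf{V}$ is a H\"older homeomorphism between $\mathbb{R}^n$ and itself with exponent depending only on $p$, this transfers to $\nabla\mathbf{u}\in C^{0,\eta}_{\mathrm{loc}}(\{|\mathbf{u}|>0\};\mathbb{R}^m)$ for some $\eta=\eta(p,n,\beta)>0$, with the stated quantitative estimate.

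The main obstacle is the degenerate/non-degenerate dichotomy in the $p$-harmonic excess decay when $p\neq 2$: balls where $|(\nabla v)_{B_s}|$ is small compared to the excess (so one cannot linearize around a nonzero constant gradient) have to be handled separately from the non-degenerate ones. This dichotomy is classical in the scalar $p$-harmonic case and transfers to our weakly coupled vectorial functional because the $p$-harmonic replacement proceeds componentwise, decoupling the system at each comparison step.
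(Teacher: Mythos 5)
Your proposal is correct and follows the same overall strategy as the paper: compare to the $p$-harmonic replacement on balls $B_r(x_0)\Subset\{|\mathbf{u}|>0\}$, observe that the replacement also stays strictly positive (so the measure term enters only at order $\kappa\lambda r^{n+\beta}$), and then run a Campanato estimate for $\mathbf{V}(\nabla u_i)$. There are two worthwhile technical differences. First, you invoke an excess-to-excess decay for $p$-harmonic functions, $\int_{B_t}|\mathbf{V}(\nabla v)-(\mathbf{V}(\nabla v))_{B_t}|^2\le c(t/s)^{n+2\alpha_0}\int_{B_s}|\mathbf{V}(\nabla v)-(\mathbf{V}(\nabla v))_{B_s}|^2$; as you correctly flag, proving this requires the degenerate/non-degenerate dichotomy (linearization when $|(\nabla v)_{B_s}|$ dominates the excess, direct energy argument otherwise), which, while classical, is a genuinely heavier tool. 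The paper instead uses only the cruder one-step bound $\avint_{B_s}|\mathbf{V}(\nabla v)-(\mathbf{V}(\nabla v))_s|^2\le c(s/t)^{\tilde\mu}\avint_{B_t}|\nabla v|^p$ (excess on the small ball by energy on the large ball, \cite[Theorem 6.4]{diening2009everywhere}), which is insufficient for a standard geometric iteration but becomes usable thanks to the pre-established energy growth \eqref{Fi-14}. Second, and correspondingly, you iterate on a geometric sequence $s_k=\tau^k s_0$ in the usual Campanato way, whereas the paper performs a single scale choice $s=r^{1+a}$ with $a=\beta/(\tilde\mu+n)$ to balance the two error exponents in \eqref{Fi-H-1} and reads off the Campanato seminorm directly. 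Both routes yield $\mathbf{V}(\nabla u_i)\in C^{0,\theta_0}_{\mathrm{loc}}$ and then $\nabla\mathbf{u}\in C^{0,\eta}_{\mathrm{loc}}$ because $\mathbf{V}^{-1}$ is H\"older; your version buys a cleaner and more standard iteration scheme at the cost of a stronger input lemma on $p$-harmonic decay, while the paper's version sidesteps the dichotomy entirely at the cost of a less transparent scaling trick.
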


\begin{proof}
First, we claim that 
$\mathbf{V}(\nabla u_i)$ is locally H\"older continuous in $\{|\mathbf{u}|>0\}$, where $\mathbf{V}$ is the map defined in \eqref{auxillary-map}.

Let $B_{r}(x_0) \Subset \{|\mathbf{u}|>0\}$, $r \leq 1$.
Similar to the proof of the estimate \eqref{E4.5} in Proposition \ref{Useful--Lemma}, define the $v_i:={u_i}^{*}_r$ as the $p$-harmonic replacement of $u_i$ in $B_r(x_0)$.
By using $\mathbf{v}=(v_1, \cdots, v_m)$ in the definition of the almost-minimizer, $\mathbf{u}$, and noting that by the maximum principle $B_{r}(x_0) \Subset \{|\mathbf{v}|>0\}$ also, we can easily obtain 
\begin{equation}
\label{Fi-05-t2}
\int_{B_r(x_0)} \sum_{i=1}^{m} \left|\mathbf{V}(\nabla u_i)-\mathbf{V}(\nabla v_i) \right|^2 \, dx \leq c \kappa r^{\beta} \left( 
\int_{B_{r}(x_0)} \sum_{i=1}^{m} \left|\nabla u_i \right|^p \, dx
+ \lambda r^{n} \right).
\end{equation}

By following the proof of inequality \eqref{Fi-14}, which commenced with the use of estimate \eqref{E4.5}; this time, invoking \eqref{Fi-05-t2} in place of \eqref{E4.5}, and repeating the same arguments as in \eqref{Fi-08}, \eqref{Fi-11}, and \eqref{Fi-12}, results in 

\begin{equation}
\label{Fi-14-new}
\avint_{B_{r}(x_0)} \sum_{i=1}^{m} |\nabla u_i|^p \, dx < c \left( \|\nabla \mathbf{u}\|_{L^{p}(D; \mathbb{R}^m)}^p + \lambda \right) r^{-\epsilon},
\end{equation}
for any fixed $\epsilon > 0$, while $c$ depending on $n,p, \kappa_0$, $\beta$ and $\epsilon$; we fix $\epsilon>0$ such that $\zeta:=\beta-\epsilon>0$. 
Moreover, we recall from \cite[Theorem 6.4]{diening2009everywhere}, the following well-known estimate 
\begin{equation}
\label{S-W-eq-0}
\avint_{B_s(x_0)} \left|\mathbf{V}(\nabla v_i)-\left(\mathbf{V}(\nabla v_i) \right)_s \right|^2 \, dx \leq c \left(\frac{s}{t} \right)^{\tilde{\mu}} \avint_{B_t(x_0)}  |\nabla v_i|^p \, dx,
\end{equation}
for any $0<s<t\leq r$, where $\left(\mathbf{V}(\nabla v_i) \right)_s$ is the average of $\mathbf{V}(\nabla v_i)$ in $B_s(x_0)$, and $\tilde{\mu}=\tilde{\mu}(p, n)$ a positive universal constant. Using \eqref{Fi-05-t2}, \eqref{Fi-14-new} and \eqref{S-W-eq-0}, for $0<s<r$, we have 
\begin{align} \nonumber
 \avint_{B_s(x_0)} \sum_{i=1}^{m} \left|\mathbf{V}(\nabla u_i)-\left(\mathbf{V}(\nabla u_i) \right)_s \right|^2 \, dx &\leq c \Bigg(  \left( \frac{r}{s} \right)^n \avint_{B_{r}(x_0)} \sum_{i=1}^{m} \left|\mathbf{V}(\nabla u_i)-\mathbf{V}(\nabla v_i) \right|^2 \, dx \\ \nonumber & 
\quad \qquad \,\,\, + \avint_{B_s(x_0)} \sum_{i=1}^{m} \left|\mathbf{V}(\nabla v_i)- \left(\mathbf{V}(\nabla v_i) \right)_s \right|^2 \, dx \\ \nonumber
& \quad \qquad \,\,\,  + \avint_{B_s(x_0)} \sum_{i=1}^{m} \left| \left(\mathbf{V}(\nabla v_i) \right)_s -\left(\mathbf{V}(\nabla u_i) \right)_s \right|^2 \, dx \Bigg) \\ \label{Fi-H-1}
&  \leq c \left( \|\nabla \mathbf{u}\|_{L^{p}(D; \mathbb{R}^m)}^p + \lambda \right) \left( \left( \frac{r}{s} \right)^n r^{\zeta} + \left( \frac{s}{r}\right)^{\tilde{\mu}} r^{-\epsilon} 
\right),
\end{align}
where for the last term we have used the following estimate 
$$ 
\begin{aligned}
\sum_{i=1}^{m} & \left| \left(\mathbf{V}(\nabla v_i) \right)_s -\left(\mathbf{V}(\nabla u_i) \right)_s \right|^2  
= \sum_{i=1}^{m} \left|\avint_{B_s(x_0)} \left( \mathbf{V}(\nabla v_i) - \mathbf{V}(\nabla u_i) \right) \, dx \right|^2 \\
 & \quad \le \sum_{i=1}^{m} \avint_{B_s(x_0)} \left|\mathbf{V}(\nabla v_i) - \mathbf{V}(\nabla u_i) \right|^2 dx 
\leq \left( \frac{r}{s} \right)^n \avint_{B_{r}(x_0)} \sum_{i=1}^{m} \left|\mathbf{V}(\nabla u_i)-\mathbf{V}(\nabla v_i) \right|^2 \, dx. 
\end{aligned}
$$ 
Now by setting $s:=r^{1+a}$ in \eqref{Fi-H-1}, and  equalizing 
the right-hand side \eqref{Fi-H-1} with 
$a=\frac{\zeta+\epsilon}{\tilde{\mu}+n}= \frac{\beta}{\tilde{\mu}+n}$; and further selecting $\epsilon$ small enough such that $\epsilon < a\tilde \mu$,
after standard computations, we end up with
$$
\avint_{B_s(x_0)} \sum_{i=1}^{m} \left| \mathbf{V}(\nabla u_i)-\left(\mathbf{V}(\nabla u_i) \right)_s \right|^2 \, dx \leq c \left( \|\nabla \mathbf{u}\|_{L^{p}(D; \mathbb{R}^m)}^p + \lambda \right) s^{\theta_0},
$$ 
for some positive constant $\theta_0$. 
Now, Morrey and Campanato space embedding theorem implies $C^{0,\theta_0}$-regularity of $\mathbf{V}(\nabla u_i)= |\nabla u_i|^{\frac{p-2}2}\nabla u_i$, locally in $\{|\mathbf{u}|>0\}$.
Hence, we can conclude that $\nabla u_i \in C_{\mathrm{loc}}^{0,\eta}(\{|\mathbf{u}|>0\})$ for some $\eta\in(0,1)$.
\end{proof}

\begin{remark}
\label{Remark-e-02}
Again, for $C^{1,\alpha}$-smooth domain $D$, for some $\alpha>0$, and 
 Dirichlet data $\mathbf{g}=(g_1, \cdots, g_m) \in C^{1,\alpha}(D;\mathbb{R}^m)$,
by using the following boundary estimate (see e.g. (4.34) in \cite[Lemma 4.5]{beck2008boundary} for the special case of $\mathbf{g}=\mathbf{0}$, and merging the results of Lemmas 4 and 5 in \cite{lieberman1988boundary} for the case of non-homogeneous data, both for the special case of flat boundary, $\partial D$, near $x_0$, and then by straightening out the 
boundary for the general case) 
$$
\avint_{B_s(x_0) \cap D} \left|\mathbf{V}(\nabla v_i)-\left(\mathbf{V}(\nabla v_i) \right)_s \right|^2 \, dx \leq c \left(\frac{s}{t} \right)^{\tilde{\mu}} \left(\avint_{B_t(x_0) \cap D}  |\nabla v_i|^p \, dx + \| g_i\|_{C^{1,\alpha}(D)}^p\right),
$$
instead of \eqref{S-W-eq-0}, one can prove the $C^{1,\tilde{\eta}}$-regularity of almost-minimizers up-to that boundary points of $\partial D$  which does not belong to $\overline{F(\mathbf{u})}$, for some $\tilde{\eta} \in (0,1)$.
\end{remark}

\begin{remark}
The results presented in this section, namely Theorems \ref{T3} and \ref{T4}, can evidently be extended to two-phase functionals, i.e.,  without imposing the non-negativity hypothesis on the  components of vectors in the  admissible class.
\end{remark}

\section{Lipschitz regularity of  local almost-minimizers}
\label{S2+}

To establish the Lipschitz continuity of almost-minimal solutions of $J$, we will employ the subsequent proposition, which is an inequality of Caccioppoli type for almost-minimal solutions of $J$.

\begin{proposition}
\label{almost-minimizer-caccioppoli}
Let $1<p<\infty$, and assume that $\mathbf{u}=(u_1, \cdots,u_m)$ is an almost-minimizer of $J$ in $D$, with some constant $\kappa\le \kappa_0$ and exponent $\beta$. 
Then, 
for any $B_r(x_0) \Subset D$, we have  
\begin{equation}
\label{G-04.1}
\int_{B_{\frac{r}{2}}(x_0)} \sum_{i=1}^{m} |\nabla u_i|^p \, dx \leq c \sum_{i=1}^{m} \left( \frac{1}{r^p} \int_{B_r(x_0)} |u_i-{u_i}_r|^p \, dx + \lambda r^n \right),
\end{equation}
for some universal constant $c=c(p, m, r, \kappa_0, \beta)$, where ${u_i}_r$ indicates the average of $u_i$ in $B_{r}(x_0)$
\end{proposition}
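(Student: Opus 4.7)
The strategy is the classical Widman hole-filling / Giaquinta–Giusti iteration argument adapted to the almost-minimizer setting. Fix $B_r(x_0)\Subset D$, set $c_i:=(u_i)_r$, and write $A:=\sum_{i=1}^m\int_{B_r(x_0)}|u_i-c_i|^p\,dx$. For any $r/2\le s<t\le r$ pick a cut-off $\eta\in C_c^\infty(B_t(x_0))$ with $\eta\equiv 1$ on $B_s(x_0)$, $0\le\eta\le 1$, and $|\nabla\eta|\le 2/(t-s)$. The candidate comparison map is
\[
v_i := u_i-\eta(u_i-c_i),\qquad i=1,\dots,m,
\]
which satisfies $\mathbf{v}=\mathbf{u}$ on $\partial B_t(x_0)$; since $u_i\ge 0$ implies $c_i\ge 0$ and $v_i$ is a convex combination of $u_i$ and $c_i$, one also has $v_i\ge 0$ (although the definition of almost-minimizer does not require this).

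Next I would insert $\mathbf{v}$ into the almost-minimality inequality on $B_t(x_0)$. Discarding the non-negative bulk term $\lambda\chi_{\{|\mathbf{u}|>0\}}$ on the left and estimating $\lambda\chi_{\{|\mathbf{v}|>0\}}\le\lambda$ on the right, I get
\[
\int_{B_t(x_0)}\sum_{i=1}^m|\nabla u_i|^p\,dx \le (1+\kappa t^\beta)\int_{B_t(x_0)}\sum_{i=1}^m|\nabla v_i|^p\,dx+C\lambda r^n.
\]
Using $\nabla v_i=(1-\eta)\nabla u_i-(u_i-c_i)\nabla\eta$, and that $1-\eta\equiv 0$ on $B_s(x_0)$, a routine application of the elementary inequality $|a+b|^p\le 2^{p-1}(|a|^p+|b|^p)$ yields
\[
\int_{B_t(x_0)}\sum_{i=1}^m|\nabla v_i|^p\,dx \le C\int_{B_t(x_0)\setminus B_s(x_0)}\sum_{i=1}^m|\nabla u_i|^p\,dx+\frac{C}{(t-s)^p}\,A.
\]

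Setting $Z(\rho):=\int_{B_\rho(x_0)}\sum_i|\nabla u_i|^p\,dx$ and $C_0:=C(1+\kappa_0 r^\beta)\ge 1$, and using $\int_{B_t\setminus B_s}=Z(t)-Z(s)$ in the two estimates above, I arrive at
\[
Z(t) \le C_0\bigl(Z(t)-Z(s)\bigr)+\frac{C_0}{(t-s)^p}A+C_0\lambda r^n,
\]
which after rearrangement and division by $C_0$ becomes
\[
Z(s) \le \Bigl(1-\tfrac{1}{C_0}\Bigr) Z(t)+\frac{1}{(t-s)^p}A+\lambda r^n,\qquad r/2\le s<t\le r.
\]
This is exactly the hypothesis of the standard Giaquinta–Giusti iteration lemma (see e.g. \cite[Lemma 6.1]{giusti2003direct}) with parameter $\theta=1-1/C_0\in(0,1)$; applying it on the interval $[r/2,r]$ absorbs the $Z(t)$ term and gives $Z(r/2)\le c\bigl(A/r^p+\lambda r^n\bigr)$, which is \eqref{G-04.1}.

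The only genuinely delicate point is the hole-filling step: because $(1+\kappa t^\beta)>1$, the standard one-shot Caccioppoli absorption fails and one is forced to let the radii vary on $[r/2,r]$ so that the coefficient $\theta=1-1/C_0$ is strictly less than one. This is why the final constant is permitted to depend on $\kappa_0$, $\beta$ and (through $C_0$) on $r$, as stated.
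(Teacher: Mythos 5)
Your proposal is correct and follows essentially the same hole-filling argument as the paper: a cut-off-truncated comparison map, the almost-minimality inequality on $B_t$, the elementary estimate on $|\nabla v_i|^p$, a rearrangement to produce $Z(s)\le\theta Z(t)+\cdots$ with $\theta<1$, and then \cite[Lemma 6.1]{giusti2003direct}. The only (welcome) streamlining is that you subtract the fixed average $c_i=(u_i)_r$ rather than the $t$-dependent average $(u_i)_t$, so the quantity $A$ is already in the form needed for the iteration lemma and you avoid the paper's extra step of comparing $\int_{B_t}|u_i-(u_i)_t|^p$ with $\int_{B_r}|u_i-(u_i)_r|^p$.
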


\begin{proof}
Let $B_r(x_0)$ be a sphere strictly contained in $D$, and let $\frac{r}{2} < s< t \leq r$. Let $\eta(x)$ be a function in $C_c^{\infty}(B_t(x_0))$, with $0 \leq \eta \leq 1$, $\eta \equiv 1$ in $B_s(x_0)$, and $|\nabla \eta| \leq \frac{2}{t-s}$. Denoting ${u_i}_t$ as the average of $u_i$ in $B_{t}(x_0)$, we set $\varphi_i=\eta(u_i-{u_i}_t)$, and consider
\begin{equation}
\label{G-04.3}
\int_{B_t(x_0)} | \nabla \varphi_i|^p \, dx = \int_{B_t(x_0)} | \nabla u_i|^p + \lambda \chi_{\{u_i>0\}} \, dx + \int_{B_t(x_0)}  \left( | \nabla \varphi_i|^p - | \nabla u_i|^p - \lambda \chi_{\{u_i>0\}}\right) \, dx.
\end{equation}
Now, let $v_i=u_i-\varphi_i={u_i}_t+(1-\eta)(u_i-{u_i}_t)$. 
By the definition of almost-minimizer, $\mathbf{u}$, we have 
$$
\begin{aligned}
\int_{B_t(x_0)} \sum_{i=1}^m|\nabla u_i|^p + \lambda \chi_{\{|\mathbf{u}|>0\}} \, dx & \leq \left(1+\kappa t^{\beta} \right) \int_{B_t(x_0)} \sum_{i=1}^m|\nabla v_i|^p + \lambda \chi_{\{|\mathbf{v}|>0\}} \, dx \\
& \leq \left(1+\kappa t^{\beta} \right) \int_{B_t(x_0)} \sum_{i=1}^m|\nabla v_i|^p + \lambda \, dx.
\end{aligned}
$$
We remark now that $\nabla \varphi_i = \nabla u_i$ in $B_s(x_0)$, and therefore the second integral on the right-hand side of \eqref{G-04.3} can be estimated by
$$ \int_{B_t(x_0) \setminus B_s(x_0)} \left(  | \nabla u_i|^p + | \nabla v_i|^p + \lambda \right) \, dx. $$
Introducing these relations in \eqref{G-04.3}, we get 
$$ \int_{B_s(x_0)} \sum_{i=1}^m|\nabla u_i|^p \, dx 
\leq \int_{B_t(x_0) \setminus B_s(x_0)}\sum_{i=1}^m |\nabla u_i|^p \, dx + c \int_{B_t(x_0)} \sum_{i=1}^m|\nabla v_i|^p \, dx + c \lambda \left| B_t(x_0) \right|. $$
We have
$$
\begin{aligned}
|\nabla v_i|^p &= \left|(1-\eta) \nabla u_i - (u_i-{u_i}_t) \nabla \eta \right|^p \\
& \leq c \left( (1-\eta)^p |\nabla u_i|^p + (t-s)^{-p} |u_i-{u_i}_t|^p \right).
\end{aligned}
$$
Thus, we obtain
$$
\begin{aligned}
\int_{B_s(x_0)} & \sum_{i=1}^m|\nabla u_i|^p \, dx 
\leq c\sum_{i=1}^m \left( \int_{B_t(x_0) \setminus B_s(x_0)} |\nabla u_i|^p \, dx + \frac{1}{(t-s)^p} \int_{B_t(x_0)} |u_i-{u_i}_t|^p \, dx + \lambda \left| B_r(x_0) \right| \right).
\end{aligned}
$$
Moreover, since we have 
$$ 
\begin{aligned}
\int_{B_t(x_0)} |u_i-{u_i}_t|^p \, dx 
& \leq 2^{p-1} \int_{B_t(x_0)} |u_i-{u_i}_r|^p + |{u_i}_r-{u_i}_t|^p \, dx \\
& \leq 2^{p-1} \int_{B_t(x_0)} |u_i-{u_i}_r|^p \, dx + 2^{p-1} |B_t(x_0)| \left| \avint_{B_t(x_0)} (u_i-{u_i}_r) \, dx \right|^p \\
& \leq 2^{p-1} \int_{B_t(x_0)} |u_i-{u_i}_r|^p \, dx + 2^{p-1} |B_t(x_0)|^{1-p} \int_{B_t(x_0)} |u_i-{u_i}_r|^p \, dx \left( \int_{B_t(x_0)} 1 \, dx \right)^{p-1} \\
& \leq c \int_{B_r(x_0)} |u_i-{u_i}_r|^p \, dx,
\end{aligned}
$$
we get
$$
\begin{aligned}
\int_{B_s(x_0)} & \sum_{i=1}^m |\nabla u_i|^p \, dx  \leq c \sum_{i=1}^m\left( \int_{B_t(x_0) \setminus B_s(x_0)} |\nabla u_i|^p \, dx + \frac{1}{(t-s)^p} \int_{B_r(x_0)} |u_i-{u_i}_r|^p \, dx + \lambda \left| B_r(x_0) \right| \right).
\end{aligned}
$$
Now, by the Widman's hole-filling argument (\cite{widman1971holder}), i.e. adding $c \sum_{i=1}^{m}\int_{B_s(x_0)}  |\nabla u_i|^p  \, dx$ on both sides of the preceding inequality, and subsequently dividing by $c+1$, we obtain
$$
\begin{aligned}
\int_{B_s(x_0)} & \sum_{i=1}^m|\nabla u_i|^p \, dx \leq \frac{c}{c+1} \int_{B_t(x_0)} \sum_{i=1}^m|\nabla u_i|^p \, dx + \frac{c}{(t-s)^p} \sum_{i=1}^m\int_{B_r(x_0)} |u_i-{u_i}_r|^p \, dx + c\lambda \left| B_r(x_0) \right|.
\end{aligned}
$$
Applying \cite[Lemma 6.1]{giusti2003direct} with
$$ I_0(s)=\int_{B_s(x_0)} \sum_{i=1}^m|\nabla u_i|^p \, dx, $$
and
$$ A=\sum_{i=1}^m\int_{B_r(x_0)} |u_i-{u_i}_r|^p \, dx, \qquad B=0, \qquad C=c\lambda \left| B_r(x_0) \right|, $$
we obtain immediately the inequality \eqref{G-04.1}.
\end{proof}

For the remainder of the paper, we will denote with $B_r:=B_r(\mathbf{0})$.

\begin{corollary}
\label{imp-cor}
Let $1<p<\infty$, and assume that $\mathbf{u}=(u_1, \cdots,u_m)$ is an almost-minimizer of $J$ in $B_1$ with some constant $\kappa\le \kappa_0$ and exponent $\beta$. Moreover, assume that $B_1=\{|\mathbf{u}| > 0\}$. Then 
\[
|\nabla \mathbf{u}(\mathbf{0})| \le C \left(
\| \mathbf{u}\|_{L^\infty(B_1; \mathbb{R}^m)} + \lambda^{\frac{1}{p}} \right),
\]
for a universal constant $C= C(p, m, n, \kappa_0, \beta)$.
\end{corollary}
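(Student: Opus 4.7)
The plan is to combine the Caccioppoli-type inequality (Proposition \ref{almost-minimizer-caccioppoli}) with the interior $C^{1,\eta}$-estimate (Theorem \ref{T4}), which is applicable near the origin precisely because the hypothesis $B_1=\{|\mathbf{u}|>0\}$ guarantees we are far from the free boundary everywhere in the interior. No new estimates are required; the only work is to match up scales so the chain of inequalities closes.

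First, I would apply Proposition \ref{almost-minimizer-caccioppoli} at $x_0=\mathbf{0}$ with $r=1$ to get
\[
\int_{B_{1/2}} \sum_{i=1}^{m} |\nabla u_i|^p \, dx \leq c\sum_{i=1}^{m}\left(\int_{B_1} |u_i-(u_i)_1|^p\,dx + \lambda\right).
\]
Using the trivial bound $|u_i-(u_i)_1| \le 2\|u_i\|_{L^\infty(B_1)} \le 2\|\mathbf{u}\|_{L^\infty(B_1;\mathbb{R}^m)}$ on the right-hand side and taking $p$th roots, one obtains
\[
\|\nabla \mathbf{u}\|_{L^p(B_{1/2};\mathbb{R}^m)} \le C\bigl(\|\mathbf{u}\|_{L^\infty(B_1;\mathbb{R}^m)}+\lambda^{1/p}\bigr).
\]

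Next, I would note that the restriction of $\mathbf{u}$ to $B_{1/2}$ is itself an almost-minimizer of $J$ on $B_{1/2}$, since the defining inequality is tested only on balls $B_r(x_0)\Subset B_{1/2}\subset B_1$. Applying Theorem \ref{T4} to this restricted almost-minimizer with the choice $\tilde D = \overline{B_{1/4}}$ and $D = B_{1/2}$ is legitimate because $\overline{B_{1/4}} \Subset \{|\mathbf{u}|>0\} = B_1$. This gives
\[
|\nabla \mathbf{u}(\mathbf{0})|\le \|\mathbf{u}\|_{C^{1,\eta}(\overline{B_{1/4}};\mathbb{R}^m)} \le C\bigl(\|\nabla \mathbf{u}\|_{L^p(B_{1/2};\mathbb{R}^m)}+\lambda^{1/p}\bigr),
\]
with a constant depending only on $p,m,n,\kappa_0,\beta$. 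Plugging in the Caccioppoli bound from the previous step closes the loop and yields the desired estimate.

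There is no substantive obstacle here: both heavy-lifting ingredients have already been proved, and the only thing to watch is the scale matching between the two (Caccioppoli contracts $B_1\to B_{1/2}$, and the $C^{1,\eta}$-estimate must be applied on $B_{1/2}$ rather than on $B_1$). Picking $\tilde D=\overline{B_{1/4}}$ and the restricted almost-minimizer on $B_{1/2}$ is the cleanest way to handle this without needing any rescaling argument.
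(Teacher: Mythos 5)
Your proof is correct and follows essentially the same route as the paper: combine the interior $C^{1,\eta}$-estimate of Theorem \ref{T4} (valid since $B_{1/4}\Subset\{|\mathbf{u}|>0\}$) with the Caccioppoli inequality of Proposition \ref{almost-minimizer-caccioppoli} to convert the $L^p$-norm of $\nabla\mathbf{u}$ into the $L^\infty$-norm of $\mathbf{u}$. The paper's proof is the same two lines, written tersely; you usefully fill in the detail that $\mathbf{u}\big|_{B_{1/2}}$ is an almost-minimizer on $B_{1/2}$ so that Theorem \ref{T4}, applied with $D=B_{1/2}$ and $\tilde D=\overline{B_{1/4}}$, produces the gradient on $B_{1/2}$ rather than on all of $B_1$, which is exactly what the Caccioppoli step controls. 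One small technical remark, shared with the paper's own writing: applying Proposition \ref{almost-minimizer-caccioppoli} at $x_0=\mathbf{0}$ with $r=1$ strictly requires $\overline{B_1}\subset D$, so one should either take $r$ slightly smaller than $1$ (the constant in the Caccioppoli lemma stays bounded) or observe that in the use made of this corollary the scaled function is actually defined on a slightly larger ball; either fix is routine.
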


\begin{proof}
By the proof of Theorem \ref{T4}, we have 
$$ \|\mathbf{u} \|_{C^{1,\eta} (B_{\frac{1}{4}}; \mathbb{R}^m )} \leq C \left( 
\|\nabla \mathbf{u}\|_{L^{p} (B_{\frac{1}{2}}; \mathbb{R}^m )} + \lambda^{\frac{1}{p}} \right). 
$$
On the other hand, by invoking Proposition \ref{almost-minimizer-caccioppoli}, we obtain 
$$ \|\mathbf{u} \|_{C^{1,\eta} (B_{\frac{1}{4}}; \mathbb{R}^m )} \leq C \left( \|\mathbf{u}\|_{L^{\infty}(B_1; \mathbb{R}^m)} + \lambda^{\frac{1}{p}} \right), $$
which completes the 
 proof of corollary.
\end{proof}

In the next proposition, we will use the following notation
$$ \mathbf{u}_{r,T} (x):= \frac{\mathbf{u}(rx)}{T}. $$

\begin{proposition}
\label{LConverg}
Assume that $1<p<\infty$. Let $\mathbf{u}^j= \left(u^j_1, \cdots, u^j_m \right)$ be a sequence of bounded almost-minimizers of $J$ in $B_2$. 
Also, set 
$$
\mathbf{v}^j(x):=\mathbf{u}^j_{r_j, T_j} (x)= \frac{\mathbf{u}^j(r_jx)}{T_j}, \qquad \text{in} \quad B_{2R},
$$
with $0<R<\frac{1}{r_j}$, where $r_j \to 0$, as $j \to \infty$, and $T_j >0$. 
Then, $\mathbf{v}^j= \left(v_1^j, \cdots,v_m^j \right)$ is the almost-minimizer (according to its own boundary values) of the following scaled functional
$$ \hat{J}_j \left(\mathbf{v}^j; D \right):=\int_{D} 
 \sum_{i=1}^{m} \left|\nabla v_i^j \right|^p + \sigma_j^p  \lambda \chi_{\left\{\left|\mathbf{v}^j \right|>0 \right\}} \, dx,
$$
with the constant $\hat{\kappa}= \kappa r_j^{\beta} $ and exponent $\hat{\beta}=\beta$, where  $\sigma_j:=\frac{r_j}{T_j}$.
Moreover, if $\left|\mathbf{v}^j \right| \leq M$ in $B_{2R}$, for any fixed $0<R<\frac{1}{r_j}$, and for some $M=M(R)>0$, then up to a subsequence, the following holds  
\begin{enumerate}
\item [(i)]
$v_i^j \to v_i^{\infty}$ weakly in $W^{1,p}(B_R)$, and also in $C^{0,\alpha}(B_R)$, for any $\alpha<1$.
\item [(ii)]
Besides, if $\sigma_j =\frac{r_j}{T_j} \to 0$, as $j \to  \infty$, then $v_i^{\infty}$ is a $p$-harmonic function in $B_R$.
\end{enumerate}
\end{proposition}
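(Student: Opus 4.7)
My plan is to address the three assertions in sequence, with the scaling identity feeding into both (i) and (ii).

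First, the scaling identity follows by a direct change of variables: given any competitor $\mathbf{w}$ for $\mathbf{v}^j$ on a ball $B_\rho(x_0) \Subset B_{2R}$, the rescaled map $y \mapsto T_j \mathbf{w}(y/r_j)$ is admissible for $\mathbf{u}^j$ on $B_{r_j\rho}(r_j x_0) \Subset B_2$; plugging it into the defining inequality of $\mathbf{u}^j$ and using $|\nabla u^j_i(r_j x)|^p = (T_j/r_j)^p |\nabla v^j_i(x)|^p$, $dy = r_j^n\,dx$, and $\chi_{\{|\mathbf{u}^j|>0\}}(r_j x) = \chi_{\{|\mathbf{v}^j|>0\}}(x)$ cancels the common prefactor $T_j^p r_j^{n-p}$ and leaves precisely $\hat{J}_j(\mathbf{v}^j; B_\rho(x_0)) \le (1+\kappa r_j^\beta \rho^\beta)\hat{J}_j(\mathbf{w}; B_\rho(x_0))$.

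For (i), I would feed this scaled almost-minimizer property into the Caccioppoli inequality of Proposition \ref{almost-minimizer-caccioppoli}; combined with the $L^\infty$-bound $|\mathbf{v}^j|\le M$, it gives a uniform $W^{1,p}(B_R;\mathbb{R}^m)$-estimate, which in turn Theorem \ref{T3} upgrades to a uniform $C^{0,\alpha}(B_R;\mathbb{R}^m)$-bound for every $\alpha<1$. A standard Banach--Alaoglu plus Arzelà--Ascoli diagonal argument then extracts the claimed subsequence.

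For (ii), I would compare $\mathbf{v}^j$ componentwise with its $p$-harmonic replacement. Let $\tilde v^j_i$ be the $p$-harmonic function in $B_\rho$ ($\rho<R$ arbitrary) agreeing with $v^j_i$ on $\partial B_\rho$. Testing $(\tilde v^j_1,\dots,\tilde v^j_m)$ in the almost-minimizer inequality for $\mathbf{v}^j$ and using the componentwise $p$-harmonic minimality $\int|\nabla \tilde v^j_i|^p \le \int|\nabla v^j_i|^p$ gives $\int_{B_\rho}(|\nabla v^j_i|^p-|\nabla \tilde v^j_i|^p)\,dx \to 0$, since $\hat\kappa = \kappa r_j^\beta \to 0$ and $\sigma_j^p \lambda \to 0$. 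The monotonicity inequality \eqref{Fi-4}, in which the first-order $p$-harmonic term drops out via $v^j_i - \tilde v^j_i \in W^{1,p}_0(B_\rho)$, upgrades this to $\int_{B_\rho}|\mathbf{V}(\nabla v^j_i) - \mathbf{V}(\nabla \tilde v^j_i)|^2\,dx \to 0$. To identify the limit, I use that $v^j_i|_{\partial B_\rho} \to v^\infty_i|_{\partial B_\rho}$ uniformly (by (i)): the $p$-Laplace maximum principle forces $\tilde v^j_i \to \tilde v^\infty_i$ uniformly on $\overline{B_\rho}$, where $\tilde v^\infty_i$ is the $p$-harmonic replacement of $v^\infty_i$; interior $C^{1,\eta}$-estimates for $p$-harmonic functions upgrade this to $L^p_{\mathrm{loc}}(B_\rho)$-strong convergence of $\nabla \tilde v^j_i$, and continuity of $\mathbf V$ then gives $\mathbf{V}(\nabla v^j_i) \to \mathbf{V}(\nabla \tilde v^\infty_i)$ in $L^2_{\mathrm{loc}}$. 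Since $\mathbf{V}$ is a homeomorphism of $\mathbb{R}^n$ and $\nabla v^j_i$ is uniformly $L^p$-bounded, this transfers to $\nabla v^j_i \to \nabla \tilde v^\infty_i$ in $L^p_{\mathrm{loc}}(B_\rho)$; matching against the weak $W^{1,p}$-limit from (i) yields $\nabla v^\infty_i = \nabla \tilde v^\infty_i$ a.e., and the boundary match on $\partial B_\rho$ then yields $v^\infty_i = \tilde v^\infty_i$ on $B_\rho$, proving the claimed $p$-harmonicity.

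The most delicate step I expect is converting the vanishing $L^2$-gap for $\mathbf V$ into identification of the weak $W^{1,p}$-limit. This hinges on strong up-to-the-boundary convergence of the $p$-harmonic replacements $\tilde v^j_i$—supplied by the $p$-Laplace maximum principle combined with the uniform $C^{0,\alpha}$-convergence from (i)—and on the fact that $\mathbf V$ is a homeomorphism of $\mathbb{R}^n$, which is what permits the final passage from an $L^2$-statement for $\mathbf V(\nabla \cdot)$ to an $L^p$-statement for the gradients themselves.
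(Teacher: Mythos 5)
Your proposal is correct and follows essentially the same route as the paper: the scaling identity is the same change of variables, (i) uses the same Caccioppoli (Proposition \ref{almost-minimizer-caccioppoli}) plus H\"older (Theorem \ref{T3}) compactness, and (ii) compares with componentwise $p$-harmonic replacements and controls the gap via the monotonicity inequality \eqref{Fi-4}; your first two steps of (ii) in effect re-derive Lemma \ref{Useful--Lemma}, which the paper simply invokes. Your limit-identification is spelled out in more detail than the paper's one-line ``$z_i^j\to v_i^\infty$ weakly in $W^{1,p}$, hence $v_i^\infty$ is $p$-harmonic,'' but it is the same underlying argument (stability of $p$-harmonic functions under the $W^{1,p}$-convergence forced by the vanishing $\mathbf V$-gap), so there is no genuine divergence in method.
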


\begin{proof}[Proof of Proposition \ref{LConverg}] 
First of all, we show that $\mathbf{v}^j$ is an almost-minimizer of $\hat{J}_j$, with the constant $\hat{\kappa}=\kappa r_j^{\beta}$ and exponent $\hat{\beta}=\beta$; namely
\begin{equation}
\label{Ext-pro-00}
\hat{J}_j \left(\mathbf{v}^j; B_{\rho}(x_0) \right) \leq \left(1+\kappa r_j^{\beta} \rho^{\beta} \right) \hat{J}_j(\mathbf{w}; B_{\rho}(x_0)), 
\end{equation}
for every ball $ B_{\rho}(x_0)$ such that $ \overline{B_{\rho}(x_0)} \subset B_{\frac{1}{r_j}}$, and for every $\mathbf{w} \in W^{1,p}(B_{\rho}(x_0); \mathbb{R}^m)$ such that $\mathbf{w} = \mathbf{v}^j$ on $\partial B_{\rho}(x_0)$.

By almost-minimality,   we have 
$$ J \left(\mathbf{u}^j; B_{r_j\rho}(y_0) \right) \leq \left(1+\kappa (r_j\rho)^{\beta} \right) J \left(\mathbf{w}^j; B_{r_j\rho}(y_0)\right), 
$$
where $y_0=r_j x_0$ and $\mathbf{w}^j(x)=T_j \mathbf{w} (\frac{x}{r_j} )$. Thus
\begin{align} \label{Ext-pro-2}
\int_{B_{r_j\rho}(y_0)} &\sum_{i=1}^{m} \left|\nabla u_i^j(y) \right|^p + \lambda \chi_{\left\{\left|\mathbf{u}^j \right|>0 \right\}} \, dy  
 \leq \left(1+\kappa r_j^{\beta} \rho^{\beta}\right) \int_{B_{r_j \rho}(y_0)} \sum_{i=1}^{m} \left|\nabla w^j_i(y) \right|^p + \lambda \chi_{\left\{ \left|\mathbf{w}^j \right|>0 \right\}} \, dy.
\end{align}
Furthermore, using
the change of variable $y := {r_j}x$ in the right-hand side \eqref{Ext-pro-2}, we see that
\begin{align} \nonumber
\int_{B_{r_j\rho}(y_0)} \sum_{i=1}^{m} &  \left|\nabla w^j_i(y) \right|^p + \lambda \chi_{\left\{\left|\mathbf{w}^j \right|>0 \right\}} \, dy = r_j^n \int_{B_{\rho}(x_0)} \sum_{j=1}^{m} \left|\nabla w_i^j(r_jx) \right|^p + \lambda \chi_{\left\{ \left|\mathbf{w}^j \right|>0 \right\}}(r_jx) \, dx \\ \label{Ext-pro-3}
& = r_j^n \int_{B_{\rho}(x_0)} \left(  \frac{T_j}{r_j} \right)^p \sum_{i=1}^{m} \left|\nabla {w_i}(x) \right|^p + \lambda \chi_{\left\{\left|{\mathbf{w}}\right|>0\right\} }(x) \, dx,
\end{align}
and a similar identity holds true with $\mathbf{u}^j$ and $\mathbf{v}^j={\mathbf{u}}^j_{r_j,T_j}$
replacing $\mathbf{w}^j$ and ${\mathbf{w}}$. 
Plugging this information and \eqref{Ext-pro-3} into \eqref{Ext-pro-2}, we obtain 
$$ \int_{B_{\rho}(x_0)} \sum_{i=1}^{m} \left|\nabla v_i^j \right|^p + \left(  \frac{r_j}{T_j} \right)^p\lambda \chi_{\left\{\left|\mathbf{v}^j \right|>0 \right\}} \, dx \leq \left(1+\kappa r_j^{\beta} \rho^{\beta} \right) \int_{B_{\rho}(x_0)} \sum_{i=1}^{m} |\nabla w_i|^p + \left(  \frac{r_j}{T_j} \right)^p \lambda \chi_{\left\{|\mathbf{w}|>0 \right\}} \, dx, $$
i.e. the desired result \eqref{Ext-pro-00}.

Moreover, using $\left|\mathbf{v}^j \right|\leq M$ in $B_{2R}$, and 
by the help of Proposition \ref{almost-minimizer-caccioppoli}, one can obtain the uniform $W^{1,p}(B_R)$ estimates for $v_i^j$, $j$ large enough. Thus, at least for a subsequence, we get
$$ v_i^j \to v_i^{\infty} \qquad \text{weakly in} \quad W^{1,p}(B_R). $$
Moreover, applying the uniform H\"older estimates of $v_i^j$, Theorem \ref{T3}, we get
$$ v_i^j \to v_i^{\infty} \qquad \text{in} \quad C^{0,\alpha}(B_R),$$
for any $\alpha<1$. This completes the proof of \textup{(i)}.

Now, define $z_i^j$ as the $p$-harmonic replacement of $v_i^j$ in $B_{R}$. Then, Lemma \ref{Useful--Lemma} implies that
\begin{equation}
\label{E4.5-n}
\int_{B_{R}} \left| \mathbf{V} \left(\nabla v_i^j \right) - \mathbf{V} \left(\nabla z_i^j \right) \right|^2 \, dx \leq c \kappa r_j^{\beta} R^{\beta} \int_{B_{R}} \sum_{i=1}^{m} \left|\nabla v_i^j \right|^p \, dx + c \sigma_j^p \lambda R^n,
\end{equation}
Now, passing to the limit $j \to \infty$ in \eqref{E4.5-n}, we obtain that, up to a subsequence, $\mathbf{V} \left(\nabla v_i^j \right) - \mathbf{V} \left(\nabla z_i^j \right) \to \mathbf{0}$,  in $L^{2}(B_R; \mathbb{R}^n)$. Now, it follows that $v_i^j-z_i^j \to 0$, in $W^{1,p}(B_R)$, and
hence, $z_i^j\to v_i^{\infty}$ weakly in $W^{1,p}(B_R)$. 
On the other hand, since $z_i^j$'s are $p$-harmonic in $B_R$, thus $v_i^{\infty}$ should be $p$-harmonic in $B_R$, too. This finishes the proof of \textup{(ii)}. 
\end{proof}

Finally, we are prepared to demonstrate the Lipschitz continuity of almost-minimizers. 
Before that, we have the following proposition regarding the linear growth of almost-minimizers at the free boundary points.

\begin{proposition}
\label{SSSP-01}
Assume that $1<p<\infty$. Let $\mathbf{u}=(u_1, \cdots, u_m)$ be an almost-minimizer of $J$ in $B_1(x_0)$, and $x_0 \in F(\mathbf{u})$. 
Also, suppose that $\sup_{B_1(x_0)} |\mathbf{u}| \leq M$. 
Then, there exists a universal constant $C\ge 1$ depending on $M$, such that
$$ 0 \leq |\mathbf{u}(x)| \leq CM |x-x_0|, $$
for all $x \in B_{r}(x_0)$, and any $0<r<1$.
\end{proposition}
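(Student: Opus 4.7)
The plan is to argue by contradiction via a blow-up at $x_0$, deriving a contradiction with the strong minimum principle applied to a non-trivial non-negative $p$-harmonic limit on $\mathbb{R}^n$. Suppose no such universal $C$ exists; setting $M(r) := \sup_{B_r(x_0)} |\mathbf{u}|$, the negation means the ratio $M(r)/r$ is unbounded as $r \to 0^+$. A naive rescaling along any sequence with $M(\rho_k)/\rho_k \to \infty$ will give $\mathbf{v}^k$ of unit size on $B_1$, but a priori unbounded on larger balls, so the first task is the Alt--Caffarelli-style dyadic selection
$$
\rho_k := \sup\bigl\{\, r \in (0, 1] \,:\, M(r) \ge k\, r \,\bigr\}.
$$
Continuity of $\mathbf{u}$ (Theorem \ref{T3}) makes $M$ continuous, so $M(\rho_k) \ge k\rho_k$ and $M(r) < k\, r$ for every $r \in (\rho_k, 1]$; moreover $\rho_k \to 0^+$, for otherwise $\rho_k \ge \rho_\infty > 0$ would force $M(\rho_\infty)/\rho_\infty = \infty$.

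Define $\mathbf{v}^k(y) := \mathbf{u}(x_0 + \rho_k y)/M(\rho_k)$; this is exactly the rescaling of Proposition \ref{LConverg} with parameters $r_j = \rho_k$ and $T_j = M(\rho_k)$, yielding $\sigma_k = \rho_k/M(\rho_k) \le 1/k \to 0$ and rescaled almost-minimality constant $\hat{\kappa}_k = \kappa \rho_k^\beta \to 0$. The essential uniform bound comes from the dyadic choice: for any $R \ge 1$ and any $k$ with $R\rho_k \le 1$, since $R\rho_k > \rho_k$ lies in the regime where $M(r) < k r$,
$$
\sup_{B_R} |\mathbf{v}^k| \;=\; \frac{M(R\rho_k)}{M(\rho_k)} \;<\; \frac{k\,R\,\rho_k}{k\,\rho_k} \;=\; R,
$$
while $\sup_{B_R}|\mathbf{v}^k| \le 1$ trivially for $R \le 1$. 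Hence on every fixed ball $B_R$ the sequence $\mathbf{v}^k$ is uniformly bounded, and Proposition \ref{LConverg} produces, along a diagonal subsequence, a locally uniform limit $\mathbf{v}^\infty : \mathbb{R}^n \to \mathbb{R}^m$ whose components $v^\infty_i$ are $p$-harmonic in every $B_R$, hence on all of $\mathbb{R}^n$.

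It then remains to extract the contradiction from the structural properties of $\mathbf{v}^\infty$. Passing to the limit in $v^k_i \ge 0$ yields $v^\infty_i \ge 0$; continuity of $\mathbf{u}$ together with $x_0 \in F(\mathbf{u})$ gives $|\mathbf{u}(x_0)| = 0$, hence $\mathbf{v}^\infty(\mathbf{0}) = \mathbf{0}$. On the other hand $\sup_{\overline{B_1}}|\mathbf{v}^k| = 1$ is attained at some $\hat y_k \in \overline{B_1}$, and along a further subsequence $\hat y_k \to \hat y_\infty \in \overline{B_1}$ with $|\mathbf{v}^\infty(\hat y_\infty)| = 1$. The strong minimum principle applied to the non-negative $p$-harmonic $v^\infty_i$ on the connected set $\mathbb{R}^n$, vanishing at the interior point $\mathbf{0}$, then forces $v^\infty_i \equiv 0$ for each $i$, giving $\mathbf{v}^\infty \equiv \mathbf{0}$ in contradiction with $|\mathbf{v}^\infty(\hat y_\infty)| = 1$.

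The main obstacle is the selection of $\rho_k$ securing the large-ball bound $\sup_{B_R}|\mathbf{v}^k| \le R$: rescaling at an arbitrary sequence realising $M(\rho_k)/\rho_k \to \infty$ need not keep $\mathbf{v}^k$ bounded beyond $B_1$, which would obstruct both the application of Proposition \ref{LConverg} on arbitrarily large balls and the extraction of an entire $p$-harmonic limit. Once this dyadic selection is in place, almost-minimality of the rescaled functional, the compactness in Proposition \ref{LConverg}, and the strong maximum principle cleanly close the argument.
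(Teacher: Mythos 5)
Your blow-up strategy --- rescale at a well-chosen sequence of radii, invoke Proposition \ref{LConverg} for compactness, obtain a non-trivial nonnegative $p$-harmonic limit vanishing at the origin, and contradict the strong minimum principle --- is the same as the paper's. The scale selection, however, is genuinely different: you use the Alt--Caffarelli-style choice $\rho_k := \sup\{r \in (0,1] : M(r) \geq kr\}$, which delivers the clean bound $\sup_{B_R}|\mathbf{v}^k| \leq R$ on every fixed ball and hence an entire $p$-harmonic limit on $\mathbb{R}^n$; the paper instead establishes a dyadic doubling inequality $S(k+1,\mathbf{u}) \leq \max\{\tilde{C}M 2^{-(k+1)}, S(k,\mathbf{u})/2\}$, whose negation only gives $|\mathbf{v}^j|\le 2$ on $B_1$ and a $p$-harmonic limit in $B_{1/2}$ --- still sufficient for the contradiction. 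Either device secures the uniform bound on larger balls that a naive rescaling would not provide, and you correctly identify this as the crux of the argument.

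There is, though, a genuine gap in your negation. The proposition asserts a \emph{universal} constant $C$, independent of the particular almost-minimizer $\mathbf{u}$ and of the base point $x_0\in F(\mathbf{u})$, depending only on $M$ and the structural parameters $p,n,m,\lambda,\kappa_0,\beta$. You negate this by fixing one $\mathbf{u}$ and one $x_0$ and assuming $M(r)/r$ is unbounded; the contradiction then shows only that \emph{this} $\mathbf{u}$ has linear growth at \emph{this} $x_0$, with a constant $C(\mathbf{u},x_0)$ of no quantitative control. That is strictly weaker, and the weakness matters: in the proof of Theorem \ref{T1}, Proposition \ref{SSSP-01} is applied at a varying point $y_0 \in \partial B_d(x_0)\cap F(\mathbf{u})$, and without uniformity in the base point the resulting gradient estimate degenerates. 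The paper handles this correctly by negating into a sequence of almost-minimizers $\mathbf{u}^j$ (each with $0\in F(\mathbf{u}^j)$, $|\mathbf{u}^j|\leq M$) that violate the bound with constant $j$. Your selection mechanism and compactness carry over verbatim to such a sequence, since the estimates in Theorem \ref{T3} and Propositions \ref{almost-minimizer-caccioppoli} and \ref{LConverg} are uniform over the class with fixed parameters --- but as written, the quantifier structure of your contradiction hypothesis does not deliver the universal constant the proposition claims.
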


\begin{proof}
First notice that, without loss of generality, we may assume that $x_0 = 0$. Also, it is enough to prove that there exists a universal constant $\tilde C\ge 1$ such that
\begin{equation}
\label{SSS1}
S(k+1, \mathbf{u}) \leq \max \left\{ \frac{\tilde CM}{2^{k+1}}, \frac{S(k,\mathbf{u})}{2} \right\},
\end{equation}
where $ S(k,\mathbf{u}) := \sup_{B_{2^{-k}}} |\mathbf{u}|$. 
The reason is that, we can deduce inductively from \eqref{SSS1} that
$$ S(k,\mathbf{u}) \leq  \tilde CM 2^{-k}. $$
Now,  for an arbitrary $r \in (0,1]$, choose $k \geq 0$ such that $2^{-(k+1)}< r \le 2^{-k}$. Then,
$$
\|\mathbf{u}\|_{L^\infty(B_{r})} \leq \|\mathbf{u}\|_{L^\infty \left(B_{2^{-k}} \right)} = S(k,\mathbf{u}) \leq \tilde{C}M 2^{-k} \leq \left(2 \tilde{C} \right)M 2^{-(k+1)} \leq \left(2 \tilde{C} \right)M r,
$$
which provide the proof of the proposition.

 Hence, to prove \eqref{SSS1}, let's assume the contrary, and towards a contradiction, suppose otherwise. Then, there exist almost-minimizers $\mathbf{u}^j$ of $J$ in $B_1$, and integers $k^j$, $j=1,2, \cdots$, such that:

\begin{equation}
\label{SSS2}
S \left(k^j+1,\mathbf{u}^j \right) > \max \left\{ \frac{jM}{2^{k^j+1}}, \frac{S \left(k^j,\mathbf{u}^j \right)}{2} \right\}.
\end{equation} 
Here $ S \left(k^j,\mathbf{u}^j \right) :=\sup_{B_{2^{-k^j}}} \left|\mathbf{u}^j \right|$, and $ \left|\mathbf{u}^j \right|  \leq M$.
Observe that $ \left|\mathbf{u}^j \right| \leq M$ implies $k^j \to \infty$. 
Now, define the following scaled auxiliary function
$$ \mathbf{v}^j(x) := \frac{\mathbf{u}^j \left(2^{-k^j}x \right)}{S \left(k^j+1,\mathbf{u}^j \right)}, \qquad \text{in} \quad B_{2^{k^j}},$$
and let
$$ \sigma_j := \frac{2^{-k^j}}{S\left(k^j+1,\mathbf{u}^j\right)}, $$
which $\sigma_j \leq 2j^{-1}M^{-1} \to 0$, by \eqref{SSS2}. 
Also, consider the following scaled energy functional
$$
\hat{J}_j \left(\mathbf{v}^j; D \right):=\int_{D} \sum_{i=1}^{m} \left|\nabla v_i^j \right|^p + \sigma_j^p \lambda \chi_{\left\{\left|\mathbf{v}^j \right|>0 \right\}} \, dx.
$$
Since $\left|\mathbf{v}^j \right|\leq 2$ in $B_{1}$, due to \eqref{SSS2}, 
so by Proposition \ref{LConverg}, $\mathbf{v}^j=\left(v_1^j, \cdots, v_m^j \right)$ is an almost-minimizer of $\hat{J}_j$ with the constant $\hat{\kappa}=\kappa r_j^{\beta}$ and exponent $\hat{\beta}=\beta$, and $v_i^{\infty}:=\lim_{j \to \infty} v_i^j$ satisfies $\Delta_p v_i^{\infty} = 0 $ in $B_{\frac12}$. On the other hand, since 
\begin{itemize}
\item
$0 \leq v_i^{\infty} \leq 2$ in $B_{\frac12}$; 
\item
$v_i^{\infty}(\mathbf{0})=0$;
\item
$\sup_{B_{\frac{1}{2}}} v_i^{\infty} =1$; 
\end{itemize}
we arrive at   a contradiction with the strong minimum principle. 
\end{proof}

\begin{proof}[Proof of Theorem \normalfont{\ref{T1}}]
Let $\mathbf{u}$ be an almost-minimizer of $J$ in $D$, with some constant $\kappa\le \kappa_0$ and exponent $\beta$. Consider a set $\tilde{D} \Subset D$, and let $r_0= \frac 14 \min \left\{2,  \dist(\tilde{D}, \partial D) \right\}$.
Define $D_{r_0}:=\{x \in D: \dist(x,\partial D)\ge r_0\}$. We know that $\mathbf{u}\in C^{0,\eta}(D_{r_0}; \mathbb{R}^m)$ by Theorem \ref{T3}.
Let 
$$M:= \|\mathbf{u}\|_{L^\infty(D_{r_0}; \mathbb{R}^m)}.$$

For an arbitrary point $x_0\in \tilde{D} \cap \{|\mathbf{u}| > 0\}$, in order to estimate $|\nabla \mathbf{u}(x_0)|$, we distinguish two cases:

\medskip

\noindent {\bf Case I:} {($d:=\dist(x_0, F(\mathbf{u}))\le r_0$)}
\medskip

\noindent
Choose $y_0\in \partial B_d(x_0)\cap F(\mathbf{u})$. Then, according to Proposition \ref{SSSP-01}, for any $x\in B_{d}(x_0)$, we have
$$ |\mathbf{u}(x)| \le CM|x-y_0| \le 2CMd. $$
Note that $B_{2d}(y_0) \subset D_{r_0}$, and so $|\mathbf{u}| \le M$ in $B_{2d}(y_0)$.
The scaling function $\mathbf{u}_d(x):=\frac{\mathbf{u}(x_0+dx)}{d}$ is an almost-minimizer of the functional
\begin{equation}\label{Lip:eq2}
 \mathbf{v} \mapsto \int_{B_1} \sum_{i=1}^{m} |\nabla v_i|^p + \lambda \, dx, 
 \end{equation}
with the constant $\kappa d^{\beta}$ and exponent $\beta$, in $B_1$. Moreover, $|\mathbf{u}_d|\le 2CM$.
By Corollary \ref{imp-cor}, we obtain that
$$
|\nabla \mathbf{u}(x_0)|=|\nabla \mathbf{u}_d(\mathbf{0})| \le \tilde C,
$$
where $\tilde C$ depends only on $p, m, n, \kappa_0r_0^{\beta}, \beta, \lambda^{\frac{1}{p}}$, and $CM$.

\medskip

\noindent {\bf Case II:} {($d \ge r_0$)}
\medskip

\noindent
Define $\mathbf{u}_{r_0}(x):=\frac{\mathbf{u}(x_0+r_0x)}{r_0}$. Then 
 $\mathbf{u}_{r_0}$ is an almost-minimizer of \eqref{Lip:eq2} with the constant $\kappa r_0^{\beta}$ and exponent $\beta$, in $B_1$, which also satisfies $ \|\mathbf{u}_{r_0}\|_{L^\infty(B_1)} \le \frac{M}{r_0}$.
Thus, $|\nabla \mathbf{u}(x_0)|=|\nabla \mathbf{u}_{r_0}(\mathbf{0})| \le \bar C$, where $\bar C$ depends only on $p, m, n, \kappa_0 r_0^{\beta}, \beta, \lambda^{\frac{1}{p}}$, and $\frac{M}{r_0}$.
\end{proof}

\medskip


\section{Boundary Lipschitz regularity ($p\geq 2$)}
\label{LLL-section}

 In this section, we delve into the boundary behavior of almost-minimizers. To begin, we recall that for a $C^{1,\alpha}$-smooth domain $D$, as mentioned in Remark \ref{Remark-e-02}, the Lipschitz continuity of almost-minimizers holds up-to the fixed boundary points on $\partial D$ except for those located at $\overline{F(\mathbf{u})} \cap \partial D$. These particular points are referred to as contact points.

It might be natural to assume that the appropriate boundary versions of Propositions \ref{almost-minimizer-caccioppoli} and \ref{LConverg} should extend to cover the result of Proposition \ref{SSSP-01}, specifically to achieve linear growth around the contact points.

However, regrettably, the proof of Proposition \ref{SSSP-01} does not extend to the (general) contact points. More precisely, upon examining the proof, it becomes evident that if:
$$ \liminf_{r \to 0} \frac{\left|B_r(x_0) \cap D \cap \{|\mathbf{u}|=0\}  \right|}{|B_r(x_0) \cap D|} > 0,$$
then Lipschitz regularity up-to $x_0$ can be achieved once again. However, in general, the argument concludes with a non-trivial $p$-harmonic function in a half-space with a zero value at $x_0$. This situation does not contradict the strong minimum principle, which is essential for the indirect argument of Proposition \ref{SSSP-01}, to re-establish linear growth.

The objective of this section is to bridge this gap, assuming also that $p \geq 2$. Before delving into it, we gather the following estimates.

In the sequel, we   assume $\partial D$ to be $C^{1,\alpha}$, for some $\alpha >0$ and we will  use the following notations
$$ B_r^+ (z) :=B_r(z) \cap D, 
\qquad \quad  \omega_r(z,\mathbf{u}):=\frac{\left| B^+_r (z) \cap \left\{ |\mathbf{u}|=0 \right\}   \right|}{\left| B_r^+(z) \right|}, $$ 
for any $z \in \overline{F(\mathbf{u})}$.

The approach outlined below also offers an alternative proof for Theorem \ref{T1} when $p \ge 2$. 
We initially prove  the extent to which almost-minimizers deviate from their $p$-harmonic replacements.

\begin{lemma} 
\label{S-4-B-01}
Assume that $D$ is a $C^1$-smooth domain, and $2\le p<\infty$. Let $\mathbf{u}=(u_1, \cdots, u_m)$ be an almost-minimizer of $J$ in $D$, with some constant $\kappa$ and exponent $\beta$, and the prescribed Lipschitz boundary value $\mathbf{g}\in C^{0,1}(D;\mathbb{R}^m))$. 
Define $v_i$ to be the $p$-harmonic replacement of ${u_i}$ in $B_r^+(z)$, $z \in \overline{F(\mathbf{u})}$, and let $\mathbf{v}=(v_1, \cdots, v_m)$. Then,
\begin{equation}
\label{S-4-B-02}
\left\| \mathbf{u}  - \mathbf{v} \right\|_{L^{\infty}(B_{\frac{r}{2}}^+(z); \mathbb{R}^m )} \leq C \left( r^{1+\frac{\beta}{2(n+p)}} + r^{1-\epsilon} \omega_r(z,\mathbf{u})^{\frac{1}{2(n+p)}} \right),
\end{equation}
for any $\epsilon \in (0,1)$, and the constant $C=c(p, n, \kappa, \beta, \epsilon ,\lambda, \| \nabla \mathbf{u} \|_{L^p(D; \mathbb{R}^m)}, \| \mathbf{g} \|_{C^{0,1}(D;\mathbb{R}^m)})$.
\end{lemma}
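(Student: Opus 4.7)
The plan is to mimic Lemma \ref{Useful--Lemma} in the boundary ball $B_r^+(z)$ and then convert the resulting gradient bound into an $L^\infty$ bound by interpolating with the Hölder regularity from Section \ref{S2}. First, I would rerun the almost-minimality argument with the competitor $\mathbf{v}=(v_1,\ldots,v_m)$ on $B_r^+(z)$: the gradient terms proceed verbatim, while the indicator terms now compare via $\chi_{\{|\mathbf{v}|>0\}}-\chi_{\{|\mathbf{u}|>0\}}\le\chi_{\{|\mathbf{u}|=0\}}$, so the excess bulk contribution is bounded by $\lambda\,\omega_r(z,\mathbf{u})|B_r^+(z)|$ rather than by $c\lambda s^n$. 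Combining this with the strict monotonicity \eqref{Fi-4} and the boundary analogue of \eqref{Fi-14} supplied by Remark \ref{Remark-e-01} (which gives $\fint_{B_r^+(z)}|\nabla u_i|^p\le Cr^{-\epsilon}$), I expect to obtain
\[
\int_{B_r^+(z)}\sum_i\bigl|\mathbf{V}(\nabla u_i)-\mathbf{V}(\nabla v_i)\bigr|^2\,dx\le C\bigl(r^{n+\beta-\epsilon}+\omega_r(z,\mathbf{u})\,r^n\bigr).
\]

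Second, the hypothesis $p\ge 2$ enters via the pointwise inequality $|z_1-z_2|^p\le C|\mathbf{V}(z_1)-\mathbf{V}(z_2)|^2$, lifting the previous bound to an $L^p$ estimate $\int_{B_r^+(z)}\sum_i|\nabla(u_i-v_i)|^p\,dx\le A_r$, with $A_r:=C(r^{n+\beta-\epsilon}+\omega_r(z,\mathbf{u})r^n)$. Since $u_i-v_i$ vanishes on both pieces of $\partial(B_r^+(z))$ (on $\partial B_r(z)\cap D$ by construction, and on $\partial D\cap B_r(z)$ because both $u_i$ and $v_i$ equal $g_i$ there), we have $u_i-v_i\in W_0^{1,p}(B_r^+(z))$ and Poincaré yields $\int|u_i-v_i|^p\le Cr^p A_r$.

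The final upgrade to $L^\infty$ I would base on a Gagliardo--Nirenberg/Moser-type argument leveraging the a priori Hölder continuity. Because $z\in\overline{F(\mathbf{u})}$ forces $\mathbf{u}(z)=0$, Theorem \ref{T3} together with Remark \ref{Remark-e-01} and the maximum principle (applied to $\mathbf{v}$) give a uniform $C^{0,\alpha_0}(B_r^+(z))$ bound for $u_i-v_i$ for every $\alpha_0\in(0,1)$. If $M:=\|u_i-v_i\|_{L^\infty(B_{r/2}^+(z))}$ is attained at $x^*$, then the Hölder modulus forces $|u_i-v_i|\ge M/2$ on a ball $B_\rho^+(x^*)\cap D$ of radius $\rho\simeq M^{1/\alpha_0}$, so that
\[
cM^{p+n/\alpha_0}\le\int_{B_r^+(z)}|u_i-v_i|^p\,dx\le Cr^pA_r.
\]
Solving for $M$ and tuning $\alpha_0$ close to $1$ together with $\epsilon$ small yields \eqref{S-4-B-02}.

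The main obstacle is this last interpolation step: while the $L^p$-gradient estimate follows rather directly from almost-minimality, the passage to an $L^\infty$ bound requires exploiting Hölder continuity, and one must carefully negotiate the trade-off between the Hölder exponent $\alpha_0<1$, the $r^{-\epsilon}$ loss in the boundary gradient estimate, and the final exponents of $r$ and $\omega_r$. The somewhat conservative exponent $\frac{1}{2(n+p)}$ in \eqref{S-4-B-02} suggests a simple, non-sharp interpolation is used (the above bookkeeping with $\alpha_0\to 1^-$ would in principle yield the cleaner exponent $\frac{1}{n+p}$), so there is ample slack for the technicalities.
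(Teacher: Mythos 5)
Your proposal is correct and mirrors the paper's own proof almost step for step: almost-minimality plus the monotonicity of $\mathbf{V}$ gives the $L^2$ gap in $\mathbf{V}$-gradients (with $\omega_r$ replacing the interior $r^n$ term), $p\ge 2$ converts this to an $L^p$ gradient bound and Poincar\'e to an $L^p$ bound on $u_i-v_i$, and the final $L^\infty$ estimate comes from the same Hölder-ball interpolation at a near-maximum point, with the same careful bookkeeping in $\alpha$ and $\epsilon$ (the paper also explicitly notes the extra case $\rho=r/4$, which you omit but which only improves the bound). Your remark that $\alpha_0\to 1^-$ would yield the sharper exponent $\tfrac{1}{n+p}$ is consistent with the paper's choice of the safe, non-optimal exponent $\tfrac{1}{2(n+p)}$.
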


\begin{proof}
By using $\mathbf{v}$ in the definition of almost-minimizer, $\mathbf{u}$, and invoking the $p$-energy minimality of $v_i$'s, we have
$$ \int_{B_r^+(z)} \sum_{i=1}^{m} \left(|\nabla u_i|^p - |\nabla v_i|^p \right) \, dx \leq \kappa r^{\beta} \int_{B_r^+(z)} \sum_{i=1}^{m} |\nabla u_i|^p \, dx + \kappa \lambda r^{\beta} |B_r^+(z)| + \lambda \left| B_r^+(z) \cap \left\{ |\mathbf{u}|=0 \right\}  \right|. $$
Now, using the uniform H\"older continuity of $u_i$'s (see \eqref{Fi-14} in Theorem \ref{T3} and also remark \ref{Remark-e-01}), we get
\begin{equation}
\label{S-4-B-01.5}
\int_{B_r^+(z)} \sum_{i=1}^{m} \left(|\nabla u_i|^p - |\nabla v_i|^p \right) \, dx \leq L r^{n+\beta-\epsilon}  + \lambda \omega_r(z, \mathbf{u})  \left| B_r^+(z) \right|,
\end{equation}
for any $\epsilon>0$, where
\begin{equation}
\label{LLL-0e-00}
L=c(p, n, \kappa_0, \epsilon, \|\mathbf{g} \|_{C^{0,1}(D;\mathbb{R}^m)}) \left( \| \nabla \mathbf{u} \|^p_{L^p(D; \mathbb{R}^m)}+\lambda \right).
\end{equation} 
Now, recalling the following inequality from \cite[equation (3.13)]{de2023nonuniformly}
$$ c_p |z_1-z_2|^p \leq \left| \mathbf{V}(z_1)- \mathbf{V}(z_2) \right|^2,
$$
where $\mathbf{V}$ is again the map \eqref{auxillary-map}, 
we get
\begin{align}\label{G-inequality}
c_p \int_{B_r^+(z)} & |\nabla u_i - \nabla v_i|^p \, dx
\leq \int_{B_r^+(z)} \left| \mathbf{V}(\nabla u_i)-\mathbf{V}(\nabla v_i) \right|^2 \, dx. 
\end{align}
At this point, we remind that by using \eqref{Fi-4}, again we have 
\begin{equation}
\label{LLL-0e-1}
c \int_{B_r^+(z)} \left| \mathbf{V}(\nabla u_i)-\mathbf{V}(\nabla v_i) \right|^2 \, dx \leq \int_{B_r^+(z)} \left(\left| \nabla u_i \right|^p-\left| \nabla v_i \right|^p \right) \, dx.
\end{equation}
Now, by considering \eqref{G-inequality} and \eqref{LLL-0e-1}, then \eqref{S-4-B-01.5} implies  
\begin{equation}
\label{S-4-B-01.25}
\int_{B_{r}^+(z)} | \nabla ({u_i} - {v_i})|^p \, dx \leq c \left( L  r^{n+\beta-\epsilon} + \lambda r^n \omega_r(z, \mathbf{u}) \right),
\end{equation}
with $c=c(p)$. Since $u_i-v_i \in W_0^{1,p}(B_r^+(z))$, by Poincar\'e inequality we conclude that
\begin{equation}
\label{S-4-B-03}
\int_{B_r^+(z)} | {u_i} - {v_i} |^{p} \, dx \leq c  r^p \left( L  r^{n+\beta-\epsilon} + \lambda r^n \omega_r(z, \mathbf{u}) \right).
\end{equation}
Since both $u_i$ and $v_i$ are uniformly H\"older continuous in $B_{\frac{3r}{4}}^+(z)$, for any exponent $\alpha \in (0,1)$, 
then if $ \mu=|(u_i-v_i)(y_0)|$, say at $y_0 \in B_{\frac{r}{2}}^+(z)$,
we get
\begin{equation}
\label{LLL-0e-1.5}
|(u_i-v_i)(x)| \geq |(u_i-v_i)(y_0)| - h_0|x-y_0|^{\alpha} \geq \mu - h_0 r_0^{\alpha}, \qquad \text{in} \quad B_{r_0}(y_0),
\end{equation}
where $r_0:=\min \left\{ \frac r4, \left(\frac\mu {2h_0} \right)^{\frac1\alpha} \right\}$  and 
$h_0$ is an upper-bound for the H\"older norm of $u_i-v_i$ which can be considered a universal constant, 
\begin{align} \nonumber
   \| u_i-v_i \|_{C^{0,\alpha}(B_{r_0}(y_0))} 
& \leq \|\mathbf{u}\|_{C^{0,\alpha}(B_{r_0}(y_0); \mathbb{R}^m)} + \| \mathbf{v} \|_{C^{0,\alpha}(B_{r_0}(y_0); \mathbb{R}^m)} \\ \nonumber
& \leq c(p, n, \kappa, \beta, \alpha, \| \mathbf{g} \|_{C^{0,1}(D;\mathbb{R}^n)}) \left( \| \nabla \mathbf{u} \|_{L^p\left(D; \mathbb{R}^m\right)} + \lambda^{\frac{1}{p}} \right) + c(p, n, \| \mathbf{g} \|_{C^{0,1}(D;\mathbb{R}^n)}) r^{-\alpha} \| \mathbf{u} \|_{L^\infty\left(B_r^+(z); \mathbb{R}^m\right)} \\ \label{LLL-0e-2}
& \leq C(p, n, \kappa, \beta, \alpha, \| \mathbf{g} \|_{C^{0,1}(D;\mathbb{R}^n)}) \left( \| \nabla \mathbf{u} \|_{L^p\left(D; \mathbb{R}^m\right)} + \lambda^{\frac{1}{p}} \right)=:h_0.
\end{align}
Since $2 h_0 r_0^{\alpha} \le \mu $, then
$$ |u_i-v_i| \geq \frac{\mu}{2}, \qquad \text{in} \quad B_{r_0}(y_0). $$
From this and \eqref{S-4-B-03}, we get
$$ |B_1| r_0^n \frac{\mu^p}{2^p} \leq c \left( L  r^{n+p+\beta-\epsilon} + \lambda r^{n+p} \omega_r(z,\mathbf{u}) \right). $$ 
If $r_0= \left(\frac\mu {2h_0} \right)^{\frac1\alpha} $, by substituting the value of $r_0$,
$$ \mu^{\frac{n}{\alpha}+p} \leq c h_0^{\frac{n}{\alpha}} \left( L  r^{n+p+\beta-\epsilon} + \lambda r^{n+p} \omega_r(z,\mathbf{u}) \right). $$
Hence, $\mu$ should satisfy
$$ \mu \leq c h_0^{\frac{n}{n+p\alpha}} \left( L  r^{n+p+\beta-\epsilon} + \lambda r^{n+p} \omega_r(z,\mathbf{u}) \right)^{\frac{\alpha}{n+p\alpha}}.$$
Choosing  $\alpha < 1$ close enough to 1, and by substituting \eqref{LLL-0e-00} and \eqref{LLL-0e-2} in the above inequality, finally, we arrive at
$$
\begin{aligned}
\mu 
& \leq C \left(  r^{1+\frac{\beta}{2(n+p)}} + r^{1-\epsilon} \omega_r(z,\mathbf{u})^{\frac{1}{2(n+p)}} \right).
\end{aligned}
$$
where $C=c(p, n, \kappa, \beta, \epsilon,  \lambda, \| \nabla \mathbf{u} \|_{L^p(D; \mathbb{R}^m)}, \| \mathbf{g} \|_{C^{0,1}(D;\mathbb{R}^n)})$.
If $r_0=\frac r4$, then similarly from \eqref{S-4-B-03} we get 
$$ |B_1| \frac{r^n}{4^n} \frac{\mu^p}{2^p} \leq c \left( L  r^{n+p+\beta-\epsilon} + \lambda r^{n+p} \omega_r(z,\mathbf{u}) \right), $$ 
and then
$$
\mu \le C\left(r^{1+\frac\beta 2} + r  \omega_r(z,\mathbf{u})\right).
$$
This completes the proof of \eqref{S-4-B-02}.
\end{proof}

We also prove a generalization of \cite[Lemma 10.1]{MR3385167} to any $2\le p<\infty$.

\begin{proposition}
\label{Lemma10.1}
Assume that $D$ is a 
$C^1$-smooth domain, $2\le p<\infty$, and  $\mathbf{u}=(u_1, \cdots, u_m)$ is  an almost-minimizer of $J$ in $D$, with constant $\kappa$ and exponent $\beta$. Also, let $v_i$ as the $p$-harmonic replacement of $u_i$ in $B_{r}^+(z)$, $z \in \overline{F(\mathbf{u})}$. Then,
\begin{equation}
\label{Eq-Lemma10.1}
\int_{B_{r}^+(z)} \sum_{i=1}^{m} \left| \left(u_i-v_i \right)^+ \right|^p \, dx \leq C \kappa r^{p+\beta} \left( r^n + \int_{B_r^+(z)} \sum_{i=1}^{m} |\nabla u_i|^p \, dx \right),
\end{equation}
where  $f^+:=\max \left\{f,0 \right\}$ and 
$C= C(p, n)$.
\end{proposition}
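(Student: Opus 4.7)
The plan is to test the almost-minimality of $\mathbf{u}$ against the natural truncation competitor $w_i := \min(u_i, v_i)$. Since $v_i = u_i$ on $\partial B_r^+(z)$, each $w_i$ agrees with $u_i$ on that boundary. Moreover $w_i \le u_i$ pointwise, and $w_i > 0$ forces $u_i > 0$, so $\chi_{\{|\mathbf{w}|>0\}} \le \chi_{\{|\mathbf{u}|>0\}}$. The measure terms therefore partially cancel, and the almost-minimality of $\mathbf{u}$ yields
\[
\sum_{i=1}^m \int_{\{u_i > v_i\}} \bigl(|\nabla u_i|^p - |\nabla v_i|^p\bigr)\,dx \le \kappa r^{\beta}\left(\int_{B_r^+(z)}\sum_{i=1}^m |\nabla w_i|^p\,dx + \lambda|B_r^+(z)|\right),
\]
and the $|\nabla w_i|^p$ integral is controlled by $\int |\nabla u_i|^p$ since $|\nabla w_i| \in \{|\nabla u_i|,|\nabla v_i|\}$ a.e.\ and $v_i$ minimizes $p$-energy.

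Next I would exploit the strict-monotonicity inequality \eqref{Fi-4} applied with $z_1 = \nabla v_i$, $z_2 = \nabla u_i$, integrated over $\{u_i > v_i\} \cap B_r^+(z)$. The key observation is that the linear term drops out: because $(u_i - v_i)^+ \in W_0^{1,p}(B_r^+(z))$ is an admissible test function for the $p$-harmonic equation satisfied by $v_i$, one has
\[
\int_{B_r^+(z)} |\nabla v_i|^{p-2}\nabla v_i \cdot \nabla (u_i - v_i)^+\,dx = 0,
\]
and $\nabla(u_i-v_i)^+ = \mathbf{1}_{\{u_i>v_i\}}\nabla(u_i-v_i)$ almost everywhere. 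Consequently
\[
\sum_{i=1}^m \int_{B_r^+(z)} |\mathbf{V}(\nabla u_i)-\mathbf{V}(\nabla v_i)|^2\,\mathbf{1}_{\{u_i > v_i\}}\,dx \le C\kappa r^{\beta}\left(\int_{B_r^+(z)}\sum_{i=1}^m |\nabla u_i|^p\,dx + r^n\right).
\]

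To close the argument I would invoke the elementary bound $|z_1-z_2|^p \le C_p |\mathbf{V}(z_1)-\mathbf{V}(z_2)|^2$, which is precisely where the hypothesis $p \ge 2$ enters (it fails in the singular range $1<p<2$, where only an inverse-type inequality holds). This converts the $\mathbf{V}$-estimate into
\[
\sum_{i=1}^m \int_{B_r^+(z)} |\nabla (u_i - v_i)^+|^p\,dx \le C\kappa r^{\beta}\left(\int_{B_r^+(z)}\sum_{i=1}^m |\nabla u_i|^p\,dx + r^n\right).
\]
A final application of Poincaré's inequality to the functions $(u_i - v_i)^+ \in W_0^{1,p}(B_r^+(z))$ converts the $\nabla$-estimate into the $L^p$-estimate \eqref{Eq-Lemma10.1}, with constant depending only on $p$ and $n$ (absorbing the fixed $\lambda$ into the $r^n$ term).

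The main obstacle is verifying cleanly that the $p$-harmonic test relation $\int |\nabla v_i|^{p-2}\nabla v_i \cdot \nabla(u_i-v_i)^+\,dx = 0$ localizes to the set $\{u_i > v_i\}$ when plugged into \eqref{Fi-4}; once that cancellation is secured, the remaining ingredients (monotonicity, the $p\ge 2$ coercivity of $\mathbf{V}$, and Poincaré) combine routinely. One should also make sure that the $\lambda|B_r^+(z)|$ contribution coming from the characteristic-function term is bounded by $cr^n$ and thus absorbable into the right-hand side of \eqref{Eq-Lemma10.1}.
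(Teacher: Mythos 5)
Your proposal is correct and follows essentially the same route as the paper: the truncation competitor $w_i=\min(u_i,v_i)$, the observation that $\chi_{\{|\mathbf{w}|>0\}}\le\chi_{\{|\mathbf{u}|>0\}}$, the strict monotonicity inequality \eqref{Fi-4}, the coercivity $c_p|z_1-z_2|^p\le|\mathbf{V}(z_1)-\mathbf{V}(z_2)|^2$ valid for $p\ge 2$, and Poincaré on $(u_i-v_i)^+$. The cancellation you isolate by testing the $p$-harmonic equation against $(u_i-v_i)^+$ is exactly what the paper does, merely phrased via the auxiliary function $z_i=\max(u_i,v_i)$ (note $z_i-v_i=(u_i-v_i)^+$), so the two arguments are the same.
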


\begin{proof}
We define the following auxiliary functions
\begin{equation}
\label{LLL-e-1}
w_i(x)=
\begin{cases}
u_i(x), \qquad & \text{for} \quad x \in D \setminus B_{r}^+(z), \\
\min\{ u_i(x), v_i(x) \}, \qquad & \text{for} \quad x \in B_{r}^+(z),
\end{cases}
\end{equation}
which has the properties
$w_i \in W^{1,p}\left(B_{r}^+(z)\right)$ and  $w_i=u_i$ on $\partial B_{r}^+(z)$. 
By using $\mathbf{w}=(w_1, \cdots, w_m)$ in the definition of almost-minimizer, $\mathbf{u}$, we have
\begin{equation}
\label{LLL-e-2}
J \left(\mathbf{u}; B_{r}^+(z) \right) \leq \left( 1+ \kappa r^{\beta} \right) J \left(\mathbf{w}; B_{r}^+(z) \right).
\end{equation}
We further define the sets $G_i :=\{ x \in B_{r}^+(z) \, : \, w_i(x) \neq u_i(x) \} = \{ x \in B_{r}^+(z) \, : \, v_i(x) < u_i(x)\}$. Since $\nabla w_i = \nabla u_i $ a.e. in $B_{r}^+(z) \setminus G_i$, and $\nabla w_i = \nabla v_i$ a.e. in $G_i$, by \eqref{LLL-e-2}, we get
\begin{align} 
J \left(\mathbf{w}; B_{r}^+(z) \right) - J \left(\mathbf{u}; B_{r}^+(z) \right) 
\leq \int_{B_{r}^+} \sum_{i=1}^{m} \left(|\nabla w_i|^p - |\nabla u_i|^p \right) \, dx 
\label{LLL-e-3}
\leq  \sum_{i=1}^{m} \int_{G_i} \left( |\nabla v_i|^p - |\nabla u_i|^p \right) \, dx. 
\end{align}
Next we claim that 
\begin{equation} 
\label{LLL-e-4} 
 \sum_{i=1}^{m} \int_{G_i} \left| \mathbf{V}(\nabla u_i) - \mathbf{V}(\nabla v_i) \right|^2 \, dx \leq c \sum_{i=1}^{m}  \int_{G_i} \left( \left| \nabla u_i \right|^p - \left| \nabla v_i \right|^p \right) \, dx.
\end{equation}
To establish \eqref{LLL-e-4}, begin by defining $z_i(x):=\max\left\{u_i(x), v_i(x) \right\}$ for $x \in B_{r}^+(z)$. Notably, this function belongs to $W^{1,p}(B_{r}^+(z))$, and its trace coincides with $v_i$ and $u_i$ on $\partial B_{r}^+(z)$.

Now, leveraging the $p$-harmonicity of $v_i$, it is straightforward to observe that:
\begin{equation*} 
\int_{B_{r}^+(z)} \left| \nabla v_i \right|^p   \,dx = \int_{B_{r}^+(z)} \left|\nabla v_i \right|^{p-2} \left( \nabla v_i \cdot \nabla z_i \right)  \, dx. 
\end{equation*}
Hence, using \eqref{Fi-4} gives \eqref{LLL-e-4}, and the claim is proved. 
Now, \eqref{LLL-e-4}, \eqref{LLL-e-3} and \eqref{LLL-e-2}, together with \eqref{G-inequality} yield 
$$
\begin{aligned}
\sum_{i=1}^{m} \int_{G_i} \left| \nabla u_i - \nabla v_i \right|^p \, dx \leq c \kappa r^{\beta} J \left(\mathbf{w}; B_{r}^+(z) \right) 
& \leq C \kappa r^{\beta} \left( r^n + \int_{B_{r}^+(z)} \sum_{i=1}^{m} |\nabla w_i|^p \, dx \right) \\ &  
\leq C \kappa r^{\beta} \left( r^n + \int_{B_{r}^+(z)} \sum_{i=1}^{m} |\nabla u_i|^p \, dx \right),
\end{aligned}
$$
where in the last line, we have used the definition of $w_i$, and the $p$-energy minimality of $v_i$. 
Finally, applying the Poincar\'e inequality to the function $ (u_i-v_i)^+ $, gives
\begin{align*} \nonumber
\int_{B_{r}^+(z)} \sum_{i=1}^{m} \left| \left(u_i-v_i \right)^+ \right|^p \, dx & \leq C r^p \int_{B_{r}^+(z)} \sum_{i=1}^{m}  \left| \nabla \left(u_i-v_i \right)^+ \right|^p \, dx \leq C \kappa r^{p+\beta} \left( r^n + \int_{B_r^+(z)} \sum_{i=1}^{m} |\nabla u_i|^p \, dx \right),
\end{align*}
since $u_i \leq v_i $ in $B_{r}^+(z) \setminus G_i$, and hence $\nabla \left( (u_i-v_i)^+ \right) = \chi_{G_i} \left( \nabla u_i - \nabla v_i \right)$. 
This completes the proof.
\end{proof}

Proposition \ref{Lemma10.1} has the following immediate consequence.

\begin{corollary}
\label{LLL-c-1}
Assume that $D$ is a 
$C^1$-smooth domain, and $2\le p<\infty$. Let $\mathbf{u}=(u_1, \cdots, u_m)$ be an almost-minimizer of $J$ in $D$, with some constant $\kappa$ and exponent $\beta$, and the prescribed Lipschitz boundary value $\mathbf{g} \in C^{0,1}(D;\mathbb{R}^m)) $.
Define $v_i$ to be the $p$-harmonic replacement of ${u_i}$ in $B_r^+(z)$, $z \in \overline{F(\mathbf{u})}$, and let $\mathbf{v}=(v_1, \cdots, v_m)$. Then,
\begin{equation}
\label{S-4-B-02*}
\left\| \left(u_i  - v_i \right)^+ \right\|_{L^{\infty} (B_{\frac r2}^+(z) )} \leq C  r^{1+\frac{\beta-\epsilon}{n+p}},
\end{equation}
for any $\epsilon>0$, and the constant $C=c(p, n, \kappa, \beta, \epsilon, \lambda,  \| \nabla \mathbf{u} \|_{L^p(D; \mathbb{R}^m)}, \|\mathbf{g}\|_{C^{0,1}(D; \mathbb{R}^m)})$.
\end{corollary}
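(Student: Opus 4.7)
The plan is to repeat the Hölder-to-$L^\infty$ step from the end of the proof of Lemma \ref{S-4-B-01}, but starting from the one-sided $L^p$ control of Proposition \ref{Lemma10.1} rather than from the two-sided estimate \eqref{S-4-B-03}. The crucial observation is that the bound in \eqref{Eq-Lemma10.1} contains no $\omega_r(z,\mathbf{u})$-factor, so the second term in \eqref{S-4-B-02} simply disappears, leaving only the first type of contribution.

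First, I would combine \eqref{Eq-Lemma10.1} with the (boundary version of the) Morrey-Campanato estimate \eqref{Fi-14} from Theorem \ref{T3} together with Remark \ref{Remark-e-01}. For any small $\epsilon>0$ this yields
$$
\int_{B_r^+(z)} \big|(u_i - v_i)^+\big|^p \, dx
\leq C\kappa r^{p+\beta}\left(r^n + r^{n-\epsilon}\right)
\leq C r^{\,n+p+\beta-\epsilon},
$$
with $C$ depending on the parameters listed in the statement.

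Next, I would transfer this $L^p$-smallness to $L^\infty$-smallness exactly as in the final paragraph of the proof of Lemma \ref{S-4-B-01}. Fix $y_0 \in B_{r/2}^+(z)$ and set $\mu := (u_i-v_i)^+(y_0)$. Since both $u_i$ and its $p$-harmonic replacement $v_i$ are uniformly $C^{0,\alpha}$ up to the boundary (with a common Hölder constant $h_0$ as in \eqref{LLL-0e-2}), the same is true of $(u_i-v_i)^+$, and hence
$$
(u_i-v_i)^+ \geq \tfrac{\mu}{2} \quad \text{on } B_{r_0}(y_0),\qquad r_0 := \min\left\{\tfrac{r}{4},\, \left(\tfrac{\mu}{2h_0}\right)^{1/\alpha}\right\}.
$$
Integrating this lower bound against the displayed $L^p$ estimate gives
$$
|B_1|\, r_0^n\, \tfrac{\mu^p}{2^p} \leq C r^{\,n+p+\beta-\epsilon}.
$$
In the case $r_0 = (\mu/(2h_0))^{1/\alpha}$ this produces $\mu \leq C\, h_0^{n/(n+p\alpha)}\, r^{\alpha(n+p+\beta-\epsilon)/(n+p\alpha)}$, and taking $\alpha\nearrow 1$ (absorbing the lost powers into $\epsilon$) yields the exponent $1 + (\beta-\epsilon)/(n+p)$ claimed in \eqref{S-4-B-02*}. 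In the other case $r_0 = r/4$ one recovers the strictly better exponent $1 + (\beta-\epsilon)/p$, which is absorbed into the final estimate.

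I do not expect a genuine obstacle: the argument is entirely parallel to the one already carried out in Lemma \ref{S-4-B-01}. The only substantive point is to invoke Proposition \ref{Lemma10.1} in place of the full two-sided $L^p$ estimate, which is precisely what removes the $\omega_r(z,\mathbf{u})^{1/(2(n+p))}$ term; the remaining work is just the routine Hölder-interpolation trick and a clean bookkeeping of the dependence of constants.
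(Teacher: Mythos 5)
Your proposal is correct and follows essentially the same route as the paper's proof: apply Proposition \ref{Lemma10.1} combined with the boundary Morrey estimate to get the one-sided $L^p$ bound, then convert to $L^\infty$ via the uniform Hölder continuity of $u_i-v_i$ and the same two-case dichotomy on $r_0$, taking $\alpha\nearrow 1$. The only cosmetic difference is that you bound $(u_i-v_i)^+$ directly where the paper picks $y_0\in G_i$ with $(u_i-v_i)(y_0)=\mu>0$ and then notes $B_{r_0}(y_0)\subset G_i$; these are equivalent formulations of the same step.
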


\begin{proof}
If $ \mu=(u_i-v_i)(y_0)$, say at $y_0 \in G_i=\{ x \in B_{r}^+(z) \, : \, v_i(x) < u_i(x)\}$, then, by the uniform H\"older continuity of $u_i-v_i$, for any exponent $\alpha \in (0,1)$, and the same arguments of \eqref{LLL-0e-1.5} and \eqref{LLL-0e-2}, we get
$$ 
(u_i-v_i)(x) \geq (u_i-v_i)(y_0) - h_0|x-y_0|^{\alpha} \geq \mu - h_0 r_0^{\alpha}, \qquad \text{in} \quad B_{r_0}(y_0) \cap G_i,
$$
where $h_0 = C(p, n, \kappa, \beta, \alpha,, \|\mathbf{g}\|_{C^{0,1}(D; \mathbb{R}^m)}) \left( \| \nabla \mathbf{u} \|_{L^p\left(D; \mathbb{R}^m\right)} + \lambda^{\frac{1}{p}} \right)$ is the upper-bound for the H\"older norm of $u_i-v_i$, and
also $r_0:=\min \left\{ \frac r4, \left(\frac\mu {2h_0} \right)^{\frac1\alpha} \right\}$.
Then, 
$$ (u_i-v_i)(x) \geq \frac{\mu}{2}, \qquad \text{in} \quad B_{r_0}(y_0), $$
and therefore $B_{r_0}(y_0) \subset G_i$.
From this and \eqref{Eq-Lemma10.1}, together with the boundary version of \eqref{Fi-14} (Remark \ref{Remark-e-01}), we get 
$$ |B_1| r_0^n \frac{\mu^p}{2^p} \leq c \kappa r^{p+\beta} \left( r^n+ c  r^{n-\epsilon} \right), $$ 
for any $\alpha \in (0,1)$, and any $\epsilon >0$.
If $r_0= \left(\frac\mu {2h_0} \right)^{\frac1\alpha}$, by substituting the value of $r_0$, and choosing an appropriate value of $\alpha< 1$ (close enough to 1), we get
$$
\mu \leq C  r^{1+\frac{\beta-\epsilon}{n+p}},
$$
for any $\epsilon >0$, and the constant $C=c(p, n, \kappa, \beta, \epsilon, \lambda,  \| \nabla \mathbf{u} \|_{L^p(D; \mathbb{R}^m)}, \|\mathbf{g}\|_{C^{0,1}(D; \mathbb{R}^m)})$. 
Also, for the case $r_0= \frac r4$, we will get
\[
\mu \le C r^{1+\frac{\beta-\epsilon} p}.
\]
This completes the proof.
\end{proof}

Now, we present the following general approximation result, the proof of which involves a minor modification of 
\cite[Lemma 2.5]{fotouhi2023weakly}.

\begin{proposition}
\label{Lemma2.5}
Assume that $D$ is a 
$C^1$-smooth domain, $x_0\in D$, and $1<p<\infty$. Let $u \in W^{1,p}(D)$ be a non-negative function. Then, there exists $c>0$, depending only on $p$, $n$, and the regularity of the domain $D$, such that for any $r>0$ we have
\begin{equation}
\label{LLL-e-0}
\left( \frac{1}{r} \sup_{B_{\frac{r}{2}}(x_0) \cap D} v \right)^p \left| B_r(x_0) \cap \{ u =0 \}  \right| \leq c \int_{B_r(x_0) \cap D} | \nabla (u-v)|^p \, dx,
\end{equation}
where $v$ is the $p$-harmonic replacement of $u$ in $B_r(x_0) \cap D$.
\end{proposition}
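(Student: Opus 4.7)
Set $B_r^+ := B_r(x_0) \cap D$, $E := B_r(x_0) \cap \{u=0\}$, and $M := \sup_{B_{r/2}^+} v$. The plan is to produce, via a truncated test function in $W_0^{1,p}(B_r^+)$, a quantity that is identically $M/2$ on a controlled piece of $E$ and whose gradient is dominated by $|\nabla(u-v)|$; then the Poincar\'e inequality and a measure‑density argument for the superlevel set $\{v \ge M/2\}$ close the estimate. We may assume $M>0$, otherwise the inequality is trivial. Since $u \ge 0$ on $\partial B_r^+$ and $v = u$ there, the maximum principle for the $p$-Laplacian gives $v \ge 0$ throughout $B_r^+$.

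\paragraph{Step 1 — Truncated test function and Poincar\'e.} Consider
\[
\phi := \min\!\bigl((v-u)^+,\; M/2\bigr).
\]
Since $v - u \in W_0^{1,p}(B_r^+)$ (they share boundary trace), so is $\phi$. A pointwise chain-rule computation shows $|\nabla \phi| \le |\nabla(u-v)|$ a.e. in $B_r^+$ (the gradient vanishes where the truncation is active). Poincar\'e's inequality then yields
\[
\int_{B_r^+} \phi^p\,dx \;\le\; C\,r^p \int_{B_r^+} |\nabla \phi|^p\,dx \;\le\; C\,r^p \int_{B_r^+} |\nabla(u-v)|^p\,dx.
\]

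\paragraph{Step 2 — Pointwise identification on $\{u=0\}$.} On $E \cap \{v \ge M/2\}$ we have $(v-u)^+ = v \ge M/2$, hence $\phi \equiv M/2$ there. Thus
\[
\int_{B_r^+} \phi^p\,dx \;\ge\; \Bigl(\tfrac{M}{2}\Bigr)^{\!p}\,\bigl|E \cap \{v \ge M/2\}\bigr|.
\]

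\paragraph{Step 3 — Measure-density transfer.} It remains to establish the density-type bound
\[
\bigl|E \cap \{v \ge M/2\}\bigr| \;\ge\; c\,|E|, \qquad (\ast)
\]
with $c$ depending only on $n,p$ and the $C^{1}$-regularity of $\partial D$. The idea is to exploit the regularity of the nonnegative $p$-harmonic function $v$: Hölder continuity (and, when $M$ is attained near $\partial D$, a boundary Hölder estimate using the $C^1$-smoothness of the domain) produces a ball of radius $\gtrsim r$ on which $v \ge M/2$, and then a chain of Harnack balls propagates this lower bound across $B_r^+$. A compactness/contradiction argument combining these ingredients will rule out $|E \cap \{v < M/2\}|$ being too large compared to $|E|$.

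\paragraph{Conclusion and main obstacle.} Combining Steps 1–3 and dividing by $r^p$ gives
\[
\Bigl(\tfrac{M}{r}\Bigr)^{\!p}\,|E| \;\le\; C \int_{B_r^+} |\nabla(u-v)|^p\,dx,
\]
which is \eqref{LLL-e-0}. The core difficulty is Step 3, i.e.\ transferring the pointwise information $\sup_{B_{r/2}^+} v = M$ to a measure estimate on $E$; this is where Harnack's inequality (including its boundary version when $x_0$ lies close to $\partial D$, justifying the dependence of the constant on the regularity of $D$) plays the decisive role. The remaining estimates (Poincar\'e, chain rule for truncations, pointwise identification on $\{u=0\}$) are standard.
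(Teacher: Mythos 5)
Your Steps 1 and 2 are correct and standard: the truncation $\phi=\min((v-u)^+,M/2)$ lies in $W_0^{1,p}(B_r^+)$, its gradient is dominated pointwise by $|\nabla(u-v)|$, Poincar\'e applies, and on $E\cap\{v\ge M/2\}$ one has $\phi\equiv M/2$. Where the argument genuinely breaks is Step 3: the measure-density claim
\[
|E\cap\{v\ge M/2\}|\;\ge\;c\,|E|
\]
does not follow from H\"older/Harnack, and is in fact false in general. Harnack chains inside $B_r^+$ degenerate as one approaches $\partial B_r(x_0)$ and $\partial D$, so the lower bound $v\ge cM$ obtained from $\sup_{B_{r/2}^+}v=M$ only propagates to a compactly contained subset such as $B_{\gamma r}^+$ with $\gamma<1$; there is no control on $v$ on the outer annulus $B_r^+\setminus B_{\gamma r}^+$ beyond $v\ge 0$, where $v$ may be arbitrarily small while $E=B_r(x_0)\cap\{u=0\}$ has essentially all of its measure concentrated. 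The ``compactness/contradiction argument'' you invoke cannot rule this out because $v$ only matches the prescribed trace on $\partial B_r^+$, which can be small precisely near the part of $E$ you would like to control. Concretely: your Steps 1--2 plus Harnack prove cleanly the weaker estimate with $E$ replaced by $E\cap B_{\gamma r}$ on the left-hand side (which, incidentally, is all that is needed where the paper invokes the lemma, up to renaming $r$), but they do not reach the full ball $B_r$.

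To close the gap one must exploit an extra piece of information that your outline does not use: on $E=\{u=0\}$ one has $\nabla u=0$ a.e., hence $\nabla(u-v)=-\nabla v$ a.e.\ on $E$, so
\[
\int_{B_r^+}|\nabla(u-v)|^p\,dx\;\ge\;\int_{E}|\nabla v|^p\,dx.
\]
A correct proof therefore needs to combine the Poincar\'e bound (which handles $E\cap\{v\ge M/2\}$) with an estimate showing that on $E\cap\{v<M/2\}$ the quantity $r|\nabla v|$ is, on a proportional fraction in measure, bounded below in terms of $M$; this is a genuine oscillation/boundary estimate for nonnegative $p$-harmonic functions near $\partial B_r^+$, not a consequence of interior Harnack. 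As the paper only cites a companion reference for the proof rather than reproducing it, I cannot confirm the precise route taken there, but the mechanism missing from your sketch is exactly this interplay between the zero set $E$ and $\nabla v$.
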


As stated earlier in this section, our objective is to establish the Lipschitz regularity of almost-minimizers at the contact points. It's worth noting that Lipschitz regularity (and even $C^{1,\tilde{\eta}}$-regularity) near Dirichlet data and in the absence of free boundary points  has been previously discussed (refer to Remark \ref{Remark-e-02}). Our primary focus is on the following result.

\begin{theorem}
\label{Bdr-result-contact}
Assume that $D$ is a 
$C^1$-smooth domain, and $2\le p<\infty$. Let $\mathbf{u}=(u_1, \cdots, u_m)$ be an almost-minimizer of $J$ in $D$, with constant $\kappa$ and exponent $\beta$, the prescribed Lipschitz boundary value $\mathbf{g}\in C^{0,1}(D;\mathbb{R}^m))$, and moreover $z \in \overline{F(\mathbf{u})}$. 
Then, $\mathbf{u}$ has linear growth at $z$; in other words there exist constants $r_0$ and $C$ depending only on $p,n,\kappa, \lambda ,\| \nabla \mathbf{u} \|_{L^p(D; \mathbb{R}^m)} $, and $\|\mathbf{g}\|_{C^{0,1}(D;\mathbb{R}^m)}$ such that
\[
\sup_{B_r^+(z)}|\mathbf{u} | \le Cr, \qquad \text{ for all } r\le r_0.
\]
\end{theorem}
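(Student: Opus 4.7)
The approach mirrors Proposition \ref{SSSP-01}. Fix a small $\theta\in(0,1/2)$ (to be chosen) and set $S(k,\mathbf{u}):=\sup_{B_{\theta^k}^+(z)}|\mathbf{u}|$. The target is the $\theta$-adic decay
\[
S(k+1,\mathbf{u})\le\max\!\big\{\tilde C\theta^{k+1},\ S(k,\mathbf{u})/2\big\}
\]
for some constant $\tilde C=\tilde C(p,n,\kappa,\lambda,\|\nabla\mathbf{u}\|_{L^p},\|\mathbf{g}\|_{C^{0,1}})$. Iterating exactly as in Proposition \ref{SSSP-01} yields $S(k,\mathbf{u})\le\tilde C\theta^k$ and hence $\sup_{B_r^+(z)}|\mathbf{u}|\le(\tilde C/\theta)r$ for $r\le r_0$, which is the claimed linear growth. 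The difference from Proposition \ref{SSSP-01} is that the strong minimum principle, used there to extract the contradiction, does not apply at the boundary contact point $z$ (which may sit on $\partial D$); it will be replaced below by a boundary Lipschitz estimate for $p$-harmonic functions.

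If the decay fails, extract almost-minimizers $\mathbf{u}^j$ and integers $k_j\to\infty$ with $S(k_j+1,\mathbf{u}^j)>\max\{j\theta^{k_j+1},\,S(k_j,\mathbf{u}^j)/2\}$. Writing $r_j:=\theta^{k_j}$ and $T_j:=S(k_j,\mathbf{u}^j)$, the first inequality forces $\sigma_j:=r_j/T_j\to 0$. After straightening the $C^{1,\alpha}$-boundary at $z=0$, rescale $\mathbf{v}^j(x):=\mathbf{u}^j(r_jx)/T_j$ on $B_1^+$. Then $\mathbf{v}^j$ is an almost-minimizer of a scaled functional with vanishing parameters, and the up-to-boundary H\"older estimate of Remark \ref{Remark-e-01} combined with a boundary version of Proposition \ref{LConverg} (built via Corollary \ref{LLL-c-1}, rescaled, which yields $\int_{B_1^+}|\nabla\mathbf{v}^j-\nabla\mathbf{p}^j|^p\to 0$ where $\mathbf{p}^j$ is the $p$-harmonic replacement of $\mathbf{v}^j$) gives a subsequence $\mathbf{v}^j\to\mathbf{v}^\infty$ in $C^{0,\alpha}(\overline{B_1^+})$, with each $v_i^\infty$ non-negative $p$-harmonic in $B_1^+$ and vanishing on $\{x_n=0\}\cap B_1$ (since $|g_i(r_jx)|/T_j\le L_g\sigma_j\to 0$ uniformly on the flat part). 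The normalization gives $\sup_{\overline{B_1^+}}|\mathbf{v}^\infty|\le 1$, while the contradiction hypothesis forces $\sup_{\overline{B_\theta^+}}|\mathbf{v}^\infty|\ge 1/2$.

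The contradiction follows from Lieberman's boundary Lipschitz estimate for $p$-harmonic functions with zero Dirichlet data on a flat boundary portion: for a universal constant $L_0=L_0(n,p)$, one has $\|\nabla v_i^\infty\|_{L^\infty(B_{1/2}^+)}\le L_0\|v_i^\infty\|_{L^\infty(B_1^+)}\le L_0$, so $|v_i^\infty(x)|\le L_0 x_n\le L_0\theta$ in $B_\theta^+$. Choosing $\theta<1/(2\sqrt{m}\,L_0)$ from the outset guarantees $|\mathbf{v}^\infty(x)|\le\sqrt{m}\,L_0\theta<1/2$ in $B_\theta^+$, contradicting the lower bound. The central obstacle, as anticipated in the introduction of this section, is precisely that $v_i^\infty(0)=0$ is only a boundary zero, so the strong minimum principle of Proposition \ref{SSSP-01} provides no contradiction; Lieberman's quantitative boundary Lipschitz estimate serves as its replacement. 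The restriction $p\ge 2$ enters through the inequality $|z_1-z_2|^p\le|\mathbf V(z_1)-\mathbf V(z_2)|^2$ that drives Corollary \ref{LLL-c-1}, furnishing the strong $W^{1,p}$ convergence needed to identify $\mathbf{v}^\infty$ as a genuine $p$-harmonic function.
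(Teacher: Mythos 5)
Your proposal takes a genuinely different route from the paper's. The paper's proof is compactness-free and direct: it combines Proposition \ref{Lemma2.5} (which bounds $\bigl(\sup_{B_{r/2}^+} v_i / r\bigr)^p \, |B_r^+ \cap \{u_i=0\}|$ by $\int |\nabla(u_i-v_i)|^p$) with the $W^{1,p}$ estimate \eqref{S-4-B-01.25} and the $L^\infty$ estimate \eqref{S-4-B-02} from Lemma \ref{S-4-B-01}, and closes a dichotomy via the Harnack inequality: either the density $\omega_r(z,\mathbf{u})$ is small, in which case \eqref{S-4-B-02} plus Harnack at the free-boundary zero kills $\sup v_i$, or $\omega_r$ is not small, in which case \eqref{LLL-e-8.5} directly bounds $\sup v_i / r$ by $(2C\lambda)^{1/p}$. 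Your argument instead is a blow-up/compactness proof that mirrors Proposition \ref{SSSP-01}, with Lieberman's quantitative boundary Lipschitz estimate for $p$-harmonic functions vanishing on a flat boundary replacing the strong minimum principle (the paper's own §5 preamble notes exactly this failure of the minimum principle, so your replacement is the right idea), and a sufficiently small dyadic ratio $\theta$ to force the contradiction.

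That said, several pieces of your argument are not actually supplied by the paper and would need to be built. First, you invoke Corollary \ref{LLL-c-1} to get $\int_{B_1^+}|\nabla\mathbf{v}^j-\nabla\mathbf{p}^j|^p\to 0$, but that corollary controls only $\|(u_i-v_i)^+\|_{L^\infty}$; the estimate you actually want is the rescaled form of the $W^{1,p}$ bound \eqref{S-4-B-01.25} proved inside Lemma \ref{S-4-B-01}, whose right-hand side vanishes because both the scaled volume term $\sigma_j^p\lambda$ and the almost-minimality constant $\kappa r_j^\beta$ go to zero. Second, you need boundary analogues of Proposition \ref{almost-minimizer-caccioppoli} and Proposition \ref{LConverg} to extract the weak $W^{1,p}$ limit and identify it as $p$-harmonic; neither is established in the paper. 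Third, the statement allows arbitrary $z\in\overline{F(\mathbf{u})}$, so the compactness argument must also cover interior free boundary points and, since one is extracting a sequence $\mathbf{u}^j$, points whose distance to $\partial D$ is comparable to $r_j$ — for these the blow-up limit domain interpolates between a full ball and a half-space; you only treat the half-space case. Finally, the theorem assumes only $C^1$ regularity of $\partial D$; you write of "straightening the $C^{1,\alpha}$-boundary", which over-assumes. The correct mechanism is that for a $C^1$ domain the rescaled domains $\tfrac1{r_j}(D-z)$ converge to a half-space as $r_j\to0$, so Lieberman's estimate applies only to the limiting flat problem, not after a finite-scale flattening. None of these is a fatal conceptual flaw, but together they amount to a substantially unfinished alternative proof; the paper's route sidesteps all of them by avoiding blow-up altogether.
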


\begin{proof}

Invoking \eqref{LLL-e-0} with $v=v_i$ and $u=u_i$, and applying the estimate \eqref{S-4-B-01.25}, yields
\begin{equation}
\label{LLL-e-8.5}
\left( \frac{1}{r} \sup_{B_{\frac{r}{2}}^+(z)} v_i \right)^p \left| B_r^+(z) \cap \{ u_i =0 \} \right| \leq C \left(  r^{n+\beta-\epsilon} + \lambda r^n\omega_r(z,\mathbf{u}) \right),
\end{equation}
 for any $\epsilon >0 $; here  $C=c(p, n, \kappa, \epsilon,\|\mathbf{g}\|_{C^{0,1}(D;\mathbb{R}^m)}) \left( \| \nabla \mathbf{u} \|^p_{L^p(D; \mathbb{R}^m)}+\lambda \right)$.
We claim next that 
\begin{equation}
\label{LLL-e-9.5}
\frac{1}{r} \sup_{B_{\frac{r}{2}}^+(z)} v_i \leq  (2C\lambda)^{\frac1p}, \qquad \text{for any $0<r<r_0$, and $r_0$ small enough}.
\end{equation}
To prove this, by an indirect argument, assume that 
\begin{equation}
\label{LLL-e-10}
\frac{1}{r} \sup_{B_{\frac{r}{2}}^+(z)} v_i > (2C\lambda)^{\frac1p}.
\end{equation}
Then, \eqref{LLL-e-8.5} implies
\begin{equation}
\label{LLL-e-11}
C\lambda \omega_{r}(z,\mathbf{u}) \leq C r^{\beta-\epsilon}.
\end{equation}
Putting \eqref{LLL-e-11} in \eqref{S-4-B-02}, and selecting appropriate values of $\epsilon$ in \eqref{S-4-B-02} and \eqref{LLL-e-11}, implies
\begin{equation}
\label{LLL-e-12}
\left\| u_i  - v_i \right\|_{L^{\infty}(B_{\frac{r}{2}}^+(z))} \leq C \left(  r^{1+\frac{\beta}{2(n+p)}} +  r^{1+\frac{\beta}{4(n+p)}} \right) \leq C r^{1+\frac{\beta}{4(n+p)}}.
\end{equation}
Observing that $z \in F(\mathbf{u})$ and considering \eqref{LLL-e-12}, we obtain, along with the Harnack inequality, 
$$ \sup_{B_{\frac{r}{2}}^+(z)} v_i \leq C \inf_{B_{\frac{r}{2}}^+(z)} v_i \leq C r^{1+\frac{\beta}{4(n+p)}}, $$
which, for sufficiently small $r_0$,  
contradicts \eqref{LLL-e-10} and establishes  claim \eqref{LLL-e-9.5}.
This in turn implies the linear growth of $v_i$ at $z$, i.e.
\begin{equation}
\label{LLL-LG}
0 \leq v_i(x) \leq 2(2C\lambda)^{\frac1p} |x-z|, \qquad \text{for} \quad x \in B_{\frac{r_0}2(z)}^+.
\end{equation}

Now, we are ready to complete the proof. 
In the set $\{u_i \leq v_i\}$, the linear growth of $u_i$ is the direct consequence of \eqref{LLL-LG}. So, assume that we are in the set $\{u_i-v_i>0\}$. Then, by using the estimate \eqref{S-4-B-02*}, and choosing $\epsilon<\beta$, we get 
\begin{equation} 
\label{LLL-e-13} 
\left\| \left(u_i - v_i\right)^+ \right\|_{L^{\infty} \left(B_r^+(z)\right)} \leq C r^{1+\frac{\beta-\epsilon}{n+p}} \leq C r, 
\end{equation} 
or, equivalently
$$ 0 \leq u_i(x) \leq C r + \left\| v_i \right\|_{L^{\infty} \left(B_{r}^+(z) \right)}, $$
for any $x \in \{u_i-v_i>0\}$. Finally, recall \eqref{LLL-LG} to get
 the linear growth of $u_i$ at $z$.
\end{proof}

Now, we are ready to state and prove the following theorem regarding Lipschitz continuity (assuming 
$p \geq 2)$, irrespective of whether we are at contact points or free boundary points. Before that, let's emphasize the following remark.

\begin{remark}
\label{LLL-rem}
Note that the results presented in this section, namely Lemma \ref{S-4-B-01}, Proposition \ref{Lemma10.1}, Corollary \ref{LLL-c-1}, and Theorem \ref{Bdr-result-contact}, have been established for all points $z \in \overline{F(\mathbf{u})}$ in a uniform manner, rather than exclusively for the contact points $\overline{F(\mathbf{u})} \cap \partial D$. It is important to highlight that these points were the only gap in the previous sections concerning up-to-boundary Lipschitz regularity. Consequently, the following argument also provides a second proof for the local Lipschitz continuity of almost-minimizers when $p \geq 2$.
\end{remark}

\begin{theorem}[Boundary Lipschitz regularity of almost-minimizers]
\label{T1-up-to}
Assume that $D$ is a $C^{1,\alpha}$-smooth domain, and $ 2\leq p<\infty$. Let $\mathbf{u} :D \to \mathbb{R}^m $ be an almost-minimizer of $J$ in $D$ with the prescribed boundary value $\mathbf{g} \in C^{1,\alpha}(D;\mathbb{R}^m)$. Then, $\mathbf{u}$ is up-to-boundary Lipschitz continuous. 
\end{theorem}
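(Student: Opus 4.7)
The proof is a boundary-adapted version of the argument for Theorem \ref{T1}, where the interior linear growth (Proposition \ref{SSSP-01}) is replaced by the up-to-boundary linear growth (Theorem \ref{Bdr-result-contact}). Crucially, in view of Remark \ref{LLL-rem}, the latter holds uniformly at every $z \in \overline{F(\mathbf{u})}$, be it an interior free boundary point or a contact point in $\partial D$. The interior $C^{1,\eta}$-regularity (Theorem \ref{T4}) is correspondingly upgraded to the up-to-boundary $C^{1,\tilde\eta}$-regularity of Remark \ref{Remark-e-02}, valid on balls centered on $\partial D$ that avoid $\overline{F(\mathbf{u})}$.

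Fix $x_0 \in \overline D$. If $x_0 \in \overline{F(\mathbf{u})}$, Theorem \ref{Bdr-result-contact} directly furnishes $|\mathbf{u}(x)-\mathbf{u}(x_0)| = |\mathbf{u}(x)| \le C|x-x_0|$ on a neighborhood of $x_0$ in $\overline D$, which is the Lipschitz bound at $x_0$. Otherwise, set $d := \dist(x_0, \overline{F(\mathbf{u})})>0$ and $d_D := \dist(x_0, \partial D)$, and distinguish two cases.

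\textbf{Case I} ($d \le 2 d_D$): This mirrors Case I of the proof of Theorem \ref{T1}. Choose the closest $z \in \overline{F(\mathbf{u})}$ to $x_0$; Theorem \ref{Bdr-result-contact} gives $\sup_{B_{d/2}(x_0)}|\mathbf{u}| \le 2Cd$, and $B_{d/2}(x_0) \Subset D$ since $d/2 \le d_D$. The rescaling $\mathbf{u}_{d/2}(y) := 2\mathbf{u}(x_0+(d/2)y)/d$ is an almost-minimizer of the analogous scaled functional in $B_1$ by Proposition \ref{LConverg}, with $B_1 \subset \{|\mathbf{u}_{d/2}|>0\}$ and $\|\mathbf{u}_{d/2}\|_{L^\infty(B_1;\mathbb{R}^m)} \le 4C$. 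Corollary \ref{imp-cor} then yields $|\nabla \mathbf{u}(x_0)| = |\nabla \mathbf{u}_{d/2}(\mathbf{0})| \le \tilde C$.

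\textbf{Case II} ($d > 2 d_D$): Let $y_0 \in \partial D$ be the projection of $x_0$ onto $\partial D$, so $\dist(y_0, \overline{F(\mathbf{u})}) \ge d-d_D > d/2$. Rescale at $y_0$ by $d$: set $\mathbf{u}_d(y) := \mathbf{u}(y_0 + dy)/d$ on $D_d := (D-y_0)/d$, with correspondingly rescaled Dirichlet data $\mathbf{g}_d$ on $\partial D_d$. Assuming $d \le 1$, the set $\partial D_d$ remains uniformly $C^{1,\alpha}$-smooth near $\mathbf{0}$, and linear growth at a free-boundary point closest to $y_0$ (at distance at most $d+d_D < 3d/2$) gives $\|\mathbf{u}_d\|_{L^\infty(B_1 \cap D_d; \mathbb{R}^m)} \le C$. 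Since $\overline{F(\mathbf{u}_d)} \cap B_{1/2}(\mathbf{0}) = \emptyset$, Remark \ref{Remark-e-02} applied on $B_{1/2}(\mathbf{0}) \cap D_d$ delivers a universal $C^{1,\tilde\eta}$-bound for $\mathbf{u}_d$; evaluating at $(x_0-y_0)/d \in B_{1/2}(\mathbf{0}) \cap D_d$ and unscaling produces $|\nabla \mathbf{u}(x_0)| \le C'$.

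The main technical obstacle lies in Case II: one must verify that the hypotheses of Remark \ref{Remark-e-02} (uniform $C^{1,\alpha}$-smoothness of $\partial D_d$ and uniform control of $\mathbf{g}_d$) transfer under the rescaling. This reduces, via a standard boundary-flattening $C^{1,\alpha}$-diffeomorphism of $\partial D$ near $y_0$ whose norm is controlled by $\partial D$ alone, to noting that the $C^{1,\alpha}$-seminorms of $\mathbf{g}$ rescale by non-negative powers of $d \le 1$ and that only these seminorms (not the full $L^\infty$ norm of $\mathbf{g}_d$) enter the gradient estimate—one can absorb the constant shift $\mathbf{g}(y_0)$ by translating the ambient problem. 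Combining the pointwise gradient bounds from Cases I--II with the linear growth at $\overline{F(\mathbf{u})}$, and exploiting the Lipschitz (hence quasi-convex) character of $\overline D$ to integrate along rectifiable paths, one obtains the claimed up-to-boundary Lipschitz continuity of $\mathbf{u}$.
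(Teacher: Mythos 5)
Your high-level plan matches the paper's: decompose according to the distance to $\overline{F(\mathbf{u})}$, use the boundary linear growth of Theorem \ref{Bdr-result-contact} near the free boundary and the up-to-boundary $C^{1,\tilde\eta}$-regularity of Remark \ref{Remark-e-02} away from it. However, your choice of dichotomy — comparing $d=\dist(x_0,\overline{F(\mathbf{u})})$ against $2d_D=2\dist(x_0,\partial D)$ rather than against a fixed threshold $r_0$ — introduces a genuine gap: neither of your Cases I or II correctly handles $d$ large.

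Concretely, in your Case I ($d\le 2d_D$) you invoke $\sup_{B_{d/2}(x_0)}|\mathbf{u}|\le 2Cd$ via Theorem \ref{Bdr-result-contact} applied at the nearest $z\in\overline{F(\mathbf{u})}$; but that theorem only gives $\sup_{B_r^+(z)}|\mathbf{u}|\le Cr$ for $r\le r_0$, and since $B_{d/2}(x_0)\subset B_{3d/2}(z)$ you need $3d/2\le r_0$, a condition that $d\le 2d_D$ does not provide (take $x_0$ an interior point with $d=d_D=10$). Your Case II openly assumes $d\le 1$, which again is not implied by $d>2d_D$. The paper avoids this by splitting on $d\lessgtr r_0/2$: for $d>r_0/2$ it rescales by the \emph{fixed} radius $r_0$ (not by $d$) and uses that $\|\mathbf{u}\|_{L^\infty}$ is finite (from the up-to-boundary H\"older bound of Remark \ref{Remark-e-01}), so the $L^\infty$ control never relies on linear growth at a scale where Theorem \ref{Bdr-result-contact} may fail. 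To repair your argument you should first dispose of $d>r_0$ by exactly this argument (rescale by $r_0$, use the uniform $L^\infty$ bound, apply Remark \ref{Remark-e-02} or interior $C^{1,\eta}$-regularity), and only then run your two cases for $d\le r_0$.

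A secondary remark: your Case I is in one respect cleaner than the paper's — by requiring $B_{d/2}(x_0)\Subset D$ you invoke the interior Corollary \ref{imp-cor} directly, whereas the paper's Case I ($d\le r_0/2$) permits $B_d(x_0)$ to meet $\partial D$ and therefore has to appeal to a ``boundary version'' of Corollary \ref{imp-cor}. That trade-off is legitimate, but it is precisely what creates the need for your separate large-$d$ case.
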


\begin{proof}
Let $\mathbf{u}$ be an almost-minimizer of $J$ in $D$, with some constant $\kappa\le \kappa_0$ and exponent $\beta$.
For an arbitrary point $x_0 \in \{ |\mathbf{u}|>0 \}$, in order to estimate $|\nabla \mathbf{u}(x_0)|$, we argue as follows.

\medskip

Let   $r_0$ be given by Theorem \ref{Bdr-result-contact}),
and define 
$$d:=\dist \left(x_0, \overline{F(\mathbf{u})}\right),$$
and consider the following two cases:

\medskip

\noindent {\bf Case I:} ($d\le \frac{1}{2}r_0$)

\noindent
Choose $y_0 \in \partial B_d(x_0)\cap \overline{F(\mathbf{u})}$. Then, according to Theorem \ref{Bdr-result-contact} (see also Remark \ref{LLL-rem}), we have
$$ |\mathbf{u}(x)| \leq C |x-y_0| \leq Cd, $$
for any $x\in B_{d}^+(x_0)$.
Now, the scaled function $\mathbf{u}_d(x):=\frac{\mathbf{u}(x_0+dx)}{d}$, satisfying $|\mathbf{u}_d|\le C$, will be an almost-minimizer of the functional
$$ \mathbf{v} \mapsto \int \sum_{i=1}^{m} |\nabla v_i|^p + \lambda \, dx, $$
over $B_1 \cap \frac1d (D-x_0)$
with the constant $\kappa d^{\beta}$ and exponent $\beta$. 
By virtue of a modified boundary version of Corollary \ref{imp-cor}, which can be easily established (see also Remark \ref{Remark-e-02}), we obtain that 
$$
|\nabla \mathbf{u}(x_0)|=|\nabla \mathbf{u}_d(\mathbf{0})| \le \tilde C,
$$
where $\tilde C$ depends only on $p$, $m$, $n$, $\kappa_0 r_0^{\beta}$, $\beta$, $\lambda$, $\| \nabla \mathbf{u} \|_{L^p(D; \mathbb{R}^m)}$, $\|\mathbf{g}\|_{C^{0,1}(D;\mathbf{R}^m)}$, and the regularity of the domain $D$.

\medskip

\noindent {\bf Case II:} {($d > \frac{1}{2}r_0$)}

\noindent
In this case, $|\nabla \mathbf{u}(x_0)|$ is uniformly bounded by considering the Remark \ref{Remark-e-02} and arguing similar to Case II in the proof of Theorem \ref{T1}.
\end{proof}


\medskip

\paragraph{\bf{Acknowledgements}}
M. Bayrami and M. Fotouhi were supported by Iran National Science Foundation (INSF) under project No. 4001885. M. Bayrami was supported by a grant from IPM. H. Shahgholian was supported by Swedish Research Council.

\section*{Declarations}

\noindent {\bf  Data availability statement:} All data needed are contained in the manuscript.

\medskip
\noindent {\bf  Funding and/or Conflicts of interests/Competing interests:} The authors declare that there are no financial, competing or conflict of interests.


\bibliographystyle{acm}
\bibliography{mybibfile}

\end{document}